\documentclass{article}
\usepackage{amsmath,amssymb,amsthm,mathtools}
\usepackage{thm-restate}
\usepackage[margin=1in]{geometry}

\newtheorem{theorem}{Theorem}[section]
\newtheorem{proposition}[theorem]{Proposition}
\newtheorem{corollary}[theorem]{Corollary}
\newtheorem{lemma}[theorem]{Lemma}
\newtheorem{remark}[theorem]{Remark}
\newtheorem{definition}[theorem]{Definition}
\newtheorem{example}[theorem]{Example}

\newcommand{\C}{\mathbb C}
\newcommand{\E}{\mathbb E}
\newcommand{\DN}{\mathcal D_N}
\newcommand{\St}{\mathrm{St}}
\newcommand{\op}{\mathrm{op}}

\newcommand{\ip}[2]{\left\langle #1,#2\right\rangle}
\newcommand{\Arow}{\left\|\sum_{i=1}^n A_i^2\right\|_{\op}^{1/2}}

\DeclareMathOperator{\Tr}{Tr}
\DeclareMathOperator{\Rea}{Re}
\DeclareMathOperator{\Ima}{Im}
\DeclareMathOperator{\vecop}{vec}

\title{A Sudakov--Fernique proof of Lehner-type edge bounds for matrix-valued GUE sums}
\author{Beno\^{i}t Collins\thanks{Department of Mathematics, Graduate School of Science,
  Kyoto University, Kyoto 606-8502, Japan.
  E-mail: \texttt{collins@math.kyoto-u.ac.jp}}
\and
Yuta Yamagishi\thanks{Department of Mathematics, Graduate School of Science,
  Kyoto University, Kyoto 606-8502, Japan.
  E-mail: \texttt{yamagishi.yuta.48r@st.kyoto-u.ac.jp}}
  }
\date{\today}

\begin{document}

\maketitle

\begin{abstract}
	Let $A_0,A_1,\ldots,A_n\in M_N(\C)$ be Hermitian matrices, and let
	$G_1,\ldots,G_n$ be independent $M\times M$ GUE matrices,
	normalized so that $\|M^{-1/2}G_i\|\to2$ almost surely as $M\to\infty$.  We
	study the spectral edges and operator norm of
	\[
		H_M=A_0\otimes I_M+\frac1{\sqrt M}\sum_{i=1}^n A_i\otimes G_i .
	\]
	Lehner's formula identifies the right and left edges of the corresponding free
	semicircular operator as
	\[
		\rho_+=\inf_{Z\succ0}\lambda_{\max}\left(A_0+Z+\sum_{i=1}^n A_iZ^{-1}A_i\right),
	\]
	and
	\[
		\rho_-=\sup_{Z\prec0}\lambda_{\min}\left(A_0+Z+\sum_{i=1}^n A_iZ^{-1}A_i\right).
	\]
	Assuming first that $A_i\succeq0$ for $i\ge1$ and $M\ge N$, we give a
	proof, based on concentration and minimax duality, of the finite-dimensional
	edge bounds
	\[
		\E\lambda_{\max}(H_M)
		\le
		\rho_++O\!\left(\sqrt{\frac{nN}{M}}\,\Arow\right),
		\qquad
		\E\lambda_{\min}(H_M)
		\ge
		\rho_--O\!\left(\sqrt{\frac{nN}{M}}\,\Arow\right).
	\]
	Consequently the same method yields an operator-norm bound with free norm
	constant
	\[
		\rho_*=\max\{\rho_+,-\rho_-\}.
	\]
	In particular, for uniformly bounded positive coefficients, bounded $n$, and
	$N=N_M=o(M)$, this gives
	\[
		\E\|H_M\|_{\op}\le \rho_{*,M}+o(1).
	\]
	Here $\rho_{*,M}=\max\{\rho_{+,M},-\rho_{-,M}\}$ denotes the corresponding free
	norm for the possibly
	$M$-dependent coefficient matrices.  If $\rho_{*,M}\to\rho$, then
	$\limsup_{M\to\infty}\E\|H_M\|_{\op}\le\rho$.
	We then show that the same bounds can be applied to coefficient families
	that are not themselves positive: it is enough that the completely positive
	covariance map
	\[
		\eta(X)=\sum_{i=1}^n A_iXA_i
	\]
	admit a Kraus decomposition by positive semidefinite matrices.  This
	is a checkable certificate for an indefinite displayed family; by covariance
	invariance, it does not enlarge the class of random-matrix laws represented
	by a positive family.
	The proof may be viewed as a matrix-coefficient extension of the classical
	Sudakov--Fernique bounds for Gaussian matrix norms.  It uses a full-process
	Sudakov--Fernique comparison and a
	Davidson--Szarek-type estimate for the largest singular value of an
	$nN\times M$ Gaussian matrix.  The proof uses dual variational formulas for
	Lehner's edge quantities over density matrices.
	We also exhibit the failure, for arbitrary signed Hermitian coefficients, of
	the increment comparison on which the direct Sudakov argument rests.
\end{abstract}

\section{Introduction}

Free probability was introduced by Voiculescu in the 1980s as a noncommutative probability theory in which the notion of independence is replaced by a noncommutative counterpart called freeness.  Although this theory was initially developed in the context of operator algebras, it has found many applications.
One of the first striking connections was with random matrix theory, when
Voiculescu showed that independent GUE matrices become asymptotically free in the limit as their size tends to infinity~\cite{Voiculescu1991}.
Strong asymptotic freeness strengthens this by asserting that the norms of polynomials in independent random matrices converge to the norms of the corresponding free operators in the reduced free product $C^*$-algebra.  This was first proved for GUE matrices by Haagerup and Thorbj\o rnsen~\cite{HaagerupThorbjornsen}, and quantitative forms were later developed by Collins, Guionnet and Parraud~\cite{CollinsParraudGuionnet}.

One common route from polynomial norm convergence to spectral estimates is the
linearization trick, which replaces a polynomial by a larger self-adjoint linear
pencil with matrix coefficients.  However, the strong-convergence proofs
themselves use several different mechanisms.  Haagerup and Thorbj\o rnsen's
original proof uses a master equation for the Stieltjes transform of the
spectrum; Collins, Guionnet and Parraud's approach relies on interpolating
between the GUE and a free product; other techniques use resolvent and
linearization methods~\cite{Anderson2013}, matrix
concentration~\cite{BBvH2023}, and Markov inequalities and interpolation
techniques~\cite{CGVTvHClassicalI,CGVvHClassical,vanHandelSurvey2025}.

The goal of the present paper is different.  We study directly a linear
matrix-coefficient model and show how far one can obtain non-asymptotic
spectral-edge bounds using only Gaussian concentration and comparison
inequalities.  This model is natural in its own right: it is a block Gaussian
matrix with a macroscopic covariance pattern.  The resulting finite-dimensional
error is not optimized, but in the regime of uniformly bounded positive
coefficients and $N=o(M)$ it has the correct free leading constant.  We do not
prove the corresponding asymptotic lower bound.  A subsequent preprint of
Stojnic~\cite{Stojnic2026} claims the analogous asymptotic equality for a real
symmetric Gaussian Kronecker model with positive semidefinite coefficients; the
main setting of that work takes the coefficient dimensions to be fixed.  The
same leading-constant estimate applies to
certain signed families whose covariance map has a positive semidefinite Kraus
decomposition.

Specifically, we focus on the linear matrix-coefficient model
\[
	H_M=A_0\otimes I_M+\frac1{\sqrt M}\sum_{i=1}^n A_i\otimes G_i,
\]
where $A_0,A_1,\ldots,A_n\in M_N(\C)$ are Hermitian.  In the
free limit, $M^{-1/2}G_i$ is replaced by a free semicircular variable
$s_i=l_i+l_i^*$, where $l_i$ are the canonical creation operators on full Fock
space.  The following edge formulas are extracted from Lehner's semicircular
operator-norm formula.

The coefficient family determines the completely positive covariance map
\begin{equation}
	\label{eq:covariance-map}
	\eta(X)=\sum_{i=1}^n A_iXA_i,\qquad X\in M_N(\C).
\end{equation}

\begin{theorem}[Edge formulas from Lehner's semicircular formula]
	\label{thm:lehner}
	Let $l_1,\ldots,l_n$ be the canonical creation operators on full Fock space and
	let $a_0,a_1,\ldots,a_n\in M_m(\C)$ with $a_0=a_0^*$ and $a_i=a_i^*$.  Put
	\[
		x=a_0\otimes I+\sum_{i=1}^n a_i\otimes(l_i+l_i^*).
	\]
	Then
	\[
		\lambda_{\max}(x)
		=
		\inf_{z\succ0}\lambda_{\max}\left(z+a_0+\sum_{i=1}^n a_iz^{-1}a_i\right)
	\]
	and
	\[
		\lambda_{\min}(x)
		=
		\sup_{z\prec0}\lambda_{\min}\left(z+a_0+\sum_{i=1}^n a_iz^{-1}a_i\right).
	\]
\end{theorem}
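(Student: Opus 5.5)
The plan is to derive both edge formulas from Lehner's formula for the norm of a semicircular operator with matrix coefficients, applied to a shifted operator. I take Lehner's result in the form
\[
\|x\|=\inf_{z\succ0}\left\|z+a_0+\sum_i a_iz^{-1}a_i\right\|_{\op}
\]
for \emph{positive} $x$. More generally, I use it in the form valid whenever the infimum over $z\succ0$ of $\lambda_{\max}$ computes the top of the spectrum. The first step is to reduce to positivity by a shift. Replace $a_0$ by $a_0+cI_m$ with $c$ large, so that $x+c$ is positive. Both sides of the claimed identity shift by exactly $c$: on the left $\lambda_{\max}(x+c)=\lambda_{\max}(x)+c$, and on the right $z+a_0+c+\sum a_iz^{-1}a_i$ has top eigenvalue shifted by $c$. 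So it suffices to treat $x\succeq0$. There $\lambda_{\max}(x)=\|x\|$, and it remains to check that for each $z\succ0$ the matrix $F(z)=z+a_0+\sum a_iz^{-1}a_i$ satisfies $\|F(z)\|=\lambda_{\max}(F(z))$ once $c$ is large, and that restricting to such $z$ does not change the infimum.

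To see this, note that $a_0+cI\succeq0$ when $c\ge\|a_0\|$, and $z+\sum a_iz^{-1}a_i\succeq0$ for $z\succ0$. Hence $F(z)\succ0$ and its norm equals its top eigenvalue. This settles the $\lambda_{\max}$ formula, provided Lehner's formula is stated with the infimum over positive definite $z$ and the norm, which is the version I would cite.

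If instead Lehner's formula is only available as a norm formula over all $z\succ0$ for general self-adjoint $x$, I would give a direct argument for $\lambda_{\max}$.

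\textbf{Upper bound.} For $\lambda>\lambda_{\max}(F(z))$ I would construct a Schur-complement certificate. Use the operator-valued subordination identity: the $M_m$-valued Cauchy transform $g(\lambda)=E[(\lambda-x)^{-1}]$ solves
\[
g=(\lambda-a_0-\eta(g))^{-1},\qquad \eta(g)=\sum a_iga_i,
\]
and $g(\lambda)\succ0$ exactly on $\lambda>\lambda_{\max}(x)$. Setting $z=g^{-1}$ gives $\lambda=\lambda_{\max}$ of $z+a_0+\eta(z^{-1})$ in the limit. This yields the lower bound of the infimum by $\lambda_{\max}(x)$ via monotonicity.

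\textbf{Lower bound.} Conversely, $\lambda\succeq F(z)$ would yield, via a $2\times2$ block positivity argument on Fock space, that $\lambda-x\succeq0$.

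The main obstacle is making this Fock-space positivity rigorous. I would try the factorization
\[
\lambda-x=\Bigl(\textstyle\sum_i (a_i\otimes l_i^*)-z\otimes1\Bigr)^*(z^{-1}\otimes 1)\Bigl(\dots\Bigr)+\bigl(\lambda-F(z)\bigr)\otimes P_\Omega+\cdots,
\]
which uses $l_i^*l_j=\delta_{ij}$. This is exactly the argument behind Lehner's formula, so in the paper I would simply cite it together with the shift reduction.

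The $\lambda_{\min}$ formula follows by applying the $\lambda_{\max}$ formula to $-x$, which has coefficients $-a_0$ and $-a_i$. For $z\succ0$,
\[
\lambda_{\max}\Bigl(z-a_0+\sum a_iz^{-1}a_i\Bigr)=-\lambda_{\min}\Bigl(w+a_0+\sum a_iw^{-1}a_i\Bigr),\qquad w=-z\prec0,
\]
since $a_iw^{-1}a_i=-a_iz^{-1}a_i$. Taking the infimum over $z\succ0$ becomes minus the supremum over $w\prec0$, which gives $\lambda_{\min}(x)=-\lambda_{\max}(-x)$ as claimed.
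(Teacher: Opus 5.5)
Your proposal is essentially the paper's proof: shift $a_0$ by a large multiple $cI$ so that Lehner's norm formula applies and both $x+c$ and every matrix $z+a_0+c+\sum_i a_iz^{-1}a_i$ are positive (so norms are right edges), subtract $c$, and obtain the left-edge formula from $-x$ via $w=-z$. The one detail the paper adds is that Lehner's Corollary~1.5 is stated for $a_0\succeq0$ and $n\ge2$, so the case $n=1$ needs a separate remark: adjoin a zero coefficient and an extra free semicircular variable. Your fallback subordination/Fock-space sketch is not needed, and as written it mislabels directions: the subordination step gives $\inf_{z\succ0}\lambda_{\max}(F(z))\le\lambda_{\max}(x)$, not a lower bound on the infimum.
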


\begin{proof}
	Here \(\lambda_{\max}(x)=\sup\operatorname{sp}(x)\) and
	\(\lambda_{\min}(x)=\inf\operatorname{sp}(x)\).
	Lehner states an operator-norm formula under the additional assumption
	\(a_0\succeq0\) and \(n\ge2\) in \cite[Corollary~1.5]{Lehner}.  The case
	\(n=1\) follows by adjoining a zero coefficient and an additional free
	semicircular variable.  For arbitrary Hermitian \(a_0\), add a sufficiently
	large scalar multiple \(cI\) so that both \(x+cI\) and the matrices in
	Lehner's variational formula are positive.  The operator norm is then the
	right edge; subtracting \(c\) yields the first formula.  See also
	\cite[Lemma~2.4]{BBvH2023}, whose \(\varepsilon=+1\) term gives the
	corresponding right-edge variational expression after the change of variables
	\(Z\leftrightarrow Z^{-1}\), and Parmaksiz--van
	Handel~\cite{ParmaksizVanHandel2025} for the corresponding edge and
	singular-value formulation.  The left-edge formula follows by applying the
	right-edge formula to \(-x\).
\end{proof}

The quantitative inequalities below use only the variational expressions in
\eqref{eq:rho-plus}--\eqref{eq:rho-minus}; Theorem~\ref{thm:lehner} is needed
solely to identify them with the spectral edges of the free operator.

We apply this with $m=N$ and $a_i=A_i$.  Define the free right and left edges by
\begin{equation}
	\label{eq:rho-plus}
	\rho_+:=\inf_{Z\succ0}\lambda_{\max}\left(A_0+Z+\sum_{i=1}^n A_iZ^{-1}A_i\right)
\end{equation}
and
\begin{equation}
	\label{eq:rho-minus}
	\rho_-:=\sup_{Z\prec0}\lambda_{\min}\left(A_0+Z+\sum_{i=1}^n A_iZ^{-1}A_i\right).
\end{equation}
We also write
\[
	\rho_*=\max\{\rho_+,-\rho_-\}
\]
for the norm of the corresponding free self-adjoint operator, by
Theorem~\ref{thm:lehner}.
Let
\[
	\DN=\{C\in M_N(\C):C\succeq0,\ \Tr C=1\}
\]
be the density matrices, and, for \(C\in\DN\), set
\begin{equation}
	\label{eq:EC}
	E_C
	=
	C^{1/2}\eta(C)C^{1/2}
	=
	\sum_{i=1}^n(C^{1/2}A_iC^{1/2})^2.
\end{equation}
Our goal is to show
how far one can go using only Gaussian concentration and comparison inequalities.
The main result is the following finite-dimensional expected operator-norm
estimate.

\begin{restatable*}[Main theorem]{theorem}{mainthm}
	\label{thm:main}
	Assume that
	\[
		A_i\succeq0,\qquad i=1,\ldots,n,
	\]
	and assume $M\ge N$.  Then
	\[
		\E\|H_M\|_{\op}
		\le
		\rho_*+9\sqrt{\frac{nN}{M}}\,\Arow.
	\]
\end{restatable*}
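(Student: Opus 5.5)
The plan is to bound the two edges $\E\lambda_{\max}(H_M)$ and $\E\lambda_{\min}(H_M)$ separately and then combine them. For the right edge, identify $\C^N\otimes\C^M$ with $N\times M$ matrices $V$ with $\|V\|_F=1$. Then
\[
	\lambda_{\max}(H_M)=\sup_V\Bigl[\Tr(V^*A_0V)+X_V\Bigr],\qquad X_V=\frac1{\sqrt M}\sum_{i=1}^n\Tr\bigl(V^*A_iV\,G_i^{T}\bigr).
\]
This is a centered Gaussian process. Its increments are $\E|X_V-X_W|^2=\frac1M\sum_i\|V^*A_iV-W^*A_iW\|_F^2$, and I will use the full-process version of Sudakov--Fernique so that the deterministic drift $\Tr(V^*A_0V)$ can be carried along. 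Since $A_i\succeq0$, write $V^*A_iV=P_i^*P_i$ with $P_i=A_i^{1/2}V$. The increment comparison then rests on
\[
	P^*P-Q^*Q=P^*(P-Q)+(P-Q)^*Q .
\]
I will build a comparison process $Y_V$ that is linear in $V$, driven by independent $N\times M$ complex Gaussian matrices $\Gamma_i$, roughly $Y_V=\frac{2}{\sqrt M}\Rea\sum_i\Tr(\Gamma_i^*A_i^{1/2}V\,\cdot\,)$ with a suitable right factor, chosen so that $\E|X_V-X_W|^2\le\E|Y_V-Y_W|^2$. This is the step where positivity is used, and where the paper indicates the comparison fails for signed coefficients.

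\textbf{Main obstacle.} The hard part is to choose $Y$ so that the increment inequality holds with the right constant (not with a lossy $\|A_i\|$ factor) and so that $\E\sup_V[\Tr(V^*A_0V)+Y_V]$ is computable. I will try parametrizing $V=C^{1/2}U$ with $C=VV^*\in\DN$ and $U$ a partial isometry. Then, for fixed $C$, the supremum over $U$ of the linear Gaussian term is a nuclear-type norm of $\Gamma^*$ applied to the stacked matrix $(A_i^{1/2}C^{1/2})_i$. Its main part should be $2\Tr E_C^{1/2}$, with $E_C$ as in \eqref{eq:EC}, because $\frac1M\Gamma\Gamma^*\approx I$. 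The deviation from this main part is controlled by the Davidson--Szarek bound: the $nN\times M$ Gaussian matrix $\Gamma$ has $\|M^{-1/2}\Gamma\|-1\lesssim\sqrt{nN/M}$ (here $M\ge N$ is used). After multiplying by $\|\sum A_i^2\|_{\op}^{1/2}$, this gives an error of order $\sqrt{nN/M}\,\Arow$. Tracking the constants through this step should produce the $9$. The target intermediate bound is
\[
	\E\lambda_{\max}(H_M)\le\sup_{C\in\DN}\Bigl[\Tr(A_0C)+2\Tr E_C^{1/2}\Bigr]+c\sqrt{\tfrac{nN}{M}}\,\Arow .
\]

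Next, identify the supremum with $\rho_+$ by minimax duality. First,
\[
	\rho_+=\inf_{Z\succ0}\sup_{C\in\DN}\Tr\bigl(C(A_0+Z+\eta(Z^{-1}))\bigr).
\]
The objective is convex in $Z$, since $Z\mapsto Z^{-1}$ is operator convex and $\eta$ is positive, and it is linear in $C$ over the compact convex set $\DN$. So Sion's theorem swaps $\inf$ and $\sup$. One then checks that
\[
	\inf_{Z\succ0}\Tr\bigl(CZ+\eta(C)Z^{-1}\bigr)=2\Tr\bigl(C^{1/2}\eta(C)C^{1/2}\bigr)^{1/2}=2\Tr E_C^{1/2},
\]
which is the matrix analogue of $\inf_z(cz+a/z)=2\sqrt{ac}$. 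This gives the right-edge bound with $\rho_+$.

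Finally, the left edge follows by applying the same argument to $-H_M$, whose coefficients are $-A_0$ and $A_i\otimes(-G_i)$. Since $-G_i$ has the same law as $G_i$, this yields $\E\lambda_{\min}(H_M)\ge\rho_--c\sqrt{nN/M}\,\Arow$. For the operator norm I will not use the crude bound $\E\max\le\max\E$, which is false in general. Instead I will run the Sudakov--Fernique comparison once over the union index set $\{(V,\pm)\}$, with process $\pm(\Tr V^*A_0V+X_V)$, which yields $\rho_*$ plus the same error. If that is awkward, I will add a Gaussian-concentration correction, which is absorbed into the constant $9$.
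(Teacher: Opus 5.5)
\textbf{There is a genuine gap: the increment comparison, which you correctly flag as the main obstacle, is where the whole proof lives, and your proposal does not supply it.} The rest of your outline matches the paper: Sudakov--Fernique with drift, the parametrization $S=C^{1/2}W$, a trace norm from the supremum over $W$, Davidson--Szarek for an $nN\times M$ block, Sion plus the weighted AM--GM for $\rho_+$, $-H_M$ for the left edge, and concentration for the norm. But the tool you propose does not give Lehner's constant.

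The identity $P^*P-Q^*Q=P^*(P-Q)+(P-Q)^*Q$, combined with the triangle inequality, yields only $\|W^*BW-V^*BV\|_F^2\le4\|B(W-V)\|_F^2$ on a fixed fiber. If you use $P_i=A_i^{1/2}V$ instead, you get a bound carrying $\|A_i\|_{\op}$ factors. Sudakov--Fernique would then produce $\Tr(A_0C)+2\sqrt2\,\Tr E_C^{1/2}$, which is exactly the $\sqrt2$ loss recorded in Section~\ref{sec:signed}. It also says nothing about pairs with $C\ne C'$.

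Your candidate comparison processes are also wrong. A process linear in $V$ on the Frobenius sphere has a Frobenius norm as its supremum. For example, $\frac2{\sqrt M}\|\sum_iA_i\Gamma_i\|_F\approx2(\Tr\sum_iA_i^2)^{1/2}$, which is too large by up to a factor $\sqrt N$. The stacked matrix $(A_i^{1/2}C^{1/2})_i$ has Gram matrix $C^{1/2}(\sum_iA_i)C^{1/2}$, not $E_C$. It therefore has the wrong homogeneity in the $A_i$, whereas $X$ is linear in $(A_i)$.

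\textbf{The missing idea.} Index by $(C,W)\in\DN\times\St_{N,M}$ with $W$ a coisometry, $WW^*=I_N$; this is where $M\ge N$ enters. Set $B_i(C)=C^{1/2}A_iC^{1/2}\succeq0$, and compare with
\[
	Y_{C,W}=\frac2{\sqrt M}\sum_{i=1}^n\ip{\Gamma_i}{B_i(C)W}_F .
\]
Positivity enters through $|B_i(C)W|=(W^*B_i(C)^2W)^{1/2}=W^*B_i(C)W$. So the original increments are $\frac1M\sum_i\bigl\||B_i(C)W|-|B_i(C')V|\bigr\|_F^2$. The Araki--Yamagami inequality $\||X|-|Y|\|_F\le\sqrt2\|X-Y\|_F$ then gives domination with constant exactly $2$, uniformly across fibers.

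The supremum over $W$ is $\frac2{\sqrt M}\|R_C\Gamma\|_1$, with $R_C=[B_1(C)\ \cdots\ B_n(C)]$ and $R_CR_C^*=E_C$. The inequality $R_C\Gamma\Gamma^*R_C^*\preceq\|\Gamma\Gamma^*\|_{\op}E_C$, together with monotonicity of $\Tr X^{1/2}$, controls this uniformly in $C$.

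\textbf{A smaller point on the norm.} Your union-index argument fails if the minus branch uses $-Y$. Take $C'=C$ and $V=-W$: the $X$-increment is $\frac4M\sum_i\|B_i(C)\|_F^2$, while the $Y$-increment vanishes. You need an independent copy of the $\Gamma_i$ on that branch. The block then has $2nN$ rows and the error constant changes, though it still fits under $9$. Your concentration fallback is what the paper does, and it is fine.
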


The constant \(9\) in Theorem~\ref{thm:main} is universal: it does not depend on
\(n,N,M\), or on the coefficient matrices, and no attempt is made to optimize
it.

The covariance-map formulation gives the following extension, which is proved
in Section~\ref{sec:rekraus}.

\begin{restatable*}[Bound from a positive semidefinite Kraus decomposition]
	{theorem}{psdrekrausthm}
	\label{thm:psd-rekraus}
	Assume $M\ge N$, and suppose that the covariance map in
	\eqref{eq:covariance-map} admits a representation
	\[
		\eta(X)=\sum_{j=1}^r\widetilde A_jX\widetilde A_j,
		\qquad \widetilde A_1,\ldots,\widetilde A_r\succeq0.
	\]
	Then
	\[
		\E\lambda_{\max}(H_M)
		\le
		\rho_+
		+6\sqrt{\frac{rN}{M}}\,\|\eta(I_N)\|_{\op}^{1/2},
	\]
	\[
		\E\lambda_{\min}(H_M)
		\ge
		\rho_-
		-6\sqrt{\frac{rN}{M}}\,\|\eta(I_N)\|_{\op}^{1/2},
	\]
	and
	\[
		\E\|H_M\|_{\op}
		\le
		\rho_*
		+9\sqrt{\frac{rN}{M}}\,\|\eta(I_N)\|_{\op}^{1/2}.
	\]
	Here \(r\) is the length of the chosen positive semidefinite Kraus
	decomposition.  The leading terms are the Lehner edges of the original
	displayed family; only the finite-dimensional error records \(r\).
\end{restatable*}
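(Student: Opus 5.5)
The plan is to reduce Theorem~\ref{thm:psd-rekraus} to the positive-coefficient case by a covariance-invariance argument. In particular, I will use the edge estimates that underlie Theorem~\ref{thm:main}, applied to the family $\widetilde A_1,\dots,\widetilde A_r$. The key observation is that both sides of the inequalities depend on the family $(A_i)$ only through $A_0$ and the covariance map $\eta$.

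On the free side this is immediate. The quantities \eqref{eq:rho-plus}--\eqref{eq:rho-minus} involve the coefficients only through $\sum_i A_iZ^{-1}A_i=\eta(Z^{-1})$. Hence the Lehner edges $\rho_\pm$, and therefore $\rho_*$, computed from $(A_0;A_1,\dots,A_n)$ coincide with those computed from $(A_0;\widetilde A_1,\dots,\widetilde A_r)$. Similarly, $\sum_j\widetilde A_j^2=\eta(I_N)=\sum_iA_i^2$, so the row-norm factor $\|\sum_j\widetilde A_j^2\|_{\op}^{1/2}$ equals $\|\eta(I_N)\|_{\op}^{1/2}$.

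On the random-matrix side I will show that $H_M$ and
\[
\widetilde H_M=A_0\otimes I_M+\frac1{\sqrt M}\sum_{j=1}^r\widetilde A_j\otimes\widetilde G_j
\]
have the same law, where the $\widetilde G_j$ are independent GUE. Both are Hermitian Gaussian matrices with the same mean $A_0\otimes I_M$, so it suffices to match covariances. Using $\E[G\otimes\bar G]$-type identities for GUE, namely $\E[\Tr(XG)\Tr(YG)]=\Tr(XY)$, I obtain for block test matrices $X=\sum_k P_k\otimes Q_k$ and $Y=\sum_l P'_l\otimes Q'_l$:
\[
\E\bigl[\Tr(X(H_M-\E H_M))\,\Tr(Y(H_M-\E H_M))\bigr]=\frac1M\sum_{k,l}\sum_i\Tr(P_kA_i)\Tr(P'_lA_i)\,\Tr(Q_kQ'_l).
\]
Now $\sum_i\Tr(PA_i)\Tr(P'A_i)$ is determined by $\eta$. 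Indeed, writing $\sum_iA_i\otimes A_i^{T}$ for the Choi-type tensor, one has $\vecop(\eta(X))=(\sum_i A_i^T\otimes A_i)\vecop(X)$, so $\eta$ determines $\sum_iA_i^T\otimes A_i$, hence $\sum_i A_i\otimes A_i$ by partial transposition, and hence all the bilinear pairings above. Since real-linear functionals $\Tr(X\,\cdot)$ with Hermitian $X$ determine a centered Gaussian Hermitian law, $H_M\overset{d}{=}\widetilde H_M$.

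Having established equality in law, I will apply the positive-case results to $\widetilde H_M$ with $n$ replaced by $r$. For the norm bound this is Theorem~\ref{thm:main}, which gives the constant $9$. For the edges I will use the one-sided estimates with constant $6$ that the proof of Theorem~\ref{thm:main} establishes, via Sudakov--Fernique together with the dual density-matrix formula and the Davidson--Szarek bound, before combining them into the norm. The left edge follows by applying the right-edge bound to $-A_0$ with the same $\widetilde A_j$, noting that $\rho_-(A_0)=-\rho_+(-A_0)$ by the variational formulas. Substituting the invariances established above then yields all three displayed inequalities.

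The main obstacle I expect is the bookkeeping in the covariance identity: the complex GUE covariance pairs $G$ with its transpose/conjugate, so I must verify that only $\sum_iA_i\otimes A_i$, and not some $\sum A_i\otimes\bar A_i$ variant, appears. Since the $A_i$ are Hermitian, $\bar A_i=A_i^T$, and both variants are determined by $\eta$, so I expect this to go through. A second point to check is that the edge bounds with constant $6$ are indeed stated (or extractable) in the positive case before this section. If they are not, I would reprove them by rerunning the argument of Theorem~\ref{thm:main} for $\lambda_{\max}$ alone.
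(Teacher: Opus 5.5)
Your proposal is correct, but it establishes the key equality in law by a different route from the paper.

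The paper first proves Proposition~\ref{prop:eta-equivalence}: after padding with zeros, any two Hermitian Kraus families of the same $\eta$ differ by a real orthogonal mixing. That proof reduces to real-linearly independent families, invokes the unitary freedom of minimal Kraus decompositions, and shows the resulting unitary is real. Equality in law then follows from the rotation invariance of independent GUE tuples (Lemma~\ref{lem:orthogonal-mixing}). The padded zeros are discarded afterwards so that the error records $r$ rather than $\max\{n,r\}$.

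You instead match the covariances of the two centered Gaussian Hermitian matrices directly. With $\E[\Tr(XG)\Tr(YG)]=\Tr(XY)$, the covariance of $M^{-1/2}\sum_iA_i\otimes G_i$ depends only on $\sum_iA_i\otimes A_i$. This tensor is determined by $\eta$ because $A\otimes B\mapsto(X\mapsto AXB)$ is a linear isomorphism $M_N\otimes M_N\to\mathrm{End}(M_N)$; your vec-plus-partial-transpose argument says the same thing. With the paper's row-major $\vecop$ the relevant tensor is $\sum_iA_i\otimes A_i^{\mathsf T}$ rather than $\sum_iA_i^{\mathsf T}\otimes A_i$, which is immaterial.

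Your route is shorter and more elementary. It needs no Kraus-uniqueness theorem, no minimality reduction and no padding, and it works with $r$ GUE matrices from the start. What the paper's route buys is the structural characterization itself. The paper reuses it for the necessity direction of Proposition~\ref{prop:pauli-threshold} and for reading a positive semidefinite Kraus decomposition as an explicit orthogonal-mixing certificate.

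The rest of your plan coincides with the paper's. This covers the invariance of $\rho_\pm$ through $\eta(Z^{-1})$, the identity $\sum_j\widetilde A_j^2=\eta(I_N)$, and applying the positive-case results with $n$ replaced by $r$. The constant-$6$ edge bounds you were unsure about are stated explicitly as Propositions~\ref{prop:upperedge} and~\ref{prop:leftedge}.
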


This criterion is nonvacuous even for very small indefinite families.  For
example, with
\[
	\sigma_x=\begin{pmatrix}0&1\\1&0\end{pmatrix},
\]
the family \((\sigma_x,I_2)\) is transformed by a real orthogonal change of
generators into
\[
	\left(\frac{I_2+\sigma_x}{\sqrt2},
	\frac{I_2-\sigma_x}{\sqrt2}\right),
\]
whose two members are positive semidefinite.  Thus
Theorem~\ref{thm:psd-rekraus} applies with \(r=2\), even though the displayed
coefficient \(\sigma_x\) is indefinite.

\begin{corollary}[Asymptotic consequence]
	\label{cor:asymptotic}
	Let \(n\) be fixed.  For each \(M\), let
	\(A_{0,M},A_{1,M},\ldots,A_{n,M}\in M_{N_M}(\C)\) be Hermitian matrices with
	\(A_{i,M}\succeq0\) for \(i=1,\ldots,n\), and assume \(N_M=o(M)\).  Suppose
	there is a constant \(K\) such that
	\[
		\max_{1\le i\le n}\|A_{i,M}\|_{\op}\le K
	\]
	for all \(M\).  Let \(\rho_{+,M}\), \(\rho_{-,M}\) be the free right and left
	edges defined in \eqref{eq:rho-plus}--\eqref{eq:rho-minus} for the coefficient
	family \(A_{0,M},A_{1,M},\ldots,A_{n,M}\), and set
	\[
		\rho_{*,M}=\max\{\rho_{+,M},-\rho_{-,M}\}.
	\]
	Let \(H_M\) be formed from these coefficients as in Theorem~\ref{thm:main}.
	Then, for all sufficiently large \(M\) (for which \(N_M=o(M)\) ensures
	\(M\ge N_M\)),
	\[
		\E\|H_M\|_{\op}\le \rho_{*,M}+o(1).
	\]
	In particular, if \(\rho_{*,M}\to\rho\), then
	\[
		\limsup_{M\to\infty}\E\|H_M\|_{\op}\le\rho .
	\]
\end{corollary}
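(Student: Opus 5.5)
The corollary follows from Theorem~\ref{thm:main} once we check that its error term tends to zero. Fix $n$. Since $N_M=o(M)$, we have $N_M/M\to0$, so $N_M\le M$ for all sufficiently large $M$. For those $M$ the hypotheses of Theorem~\ref{thm:main} hold: the coefficients are Hermitian, $A_{i,M}\succeq0$ for $i\ge1$, and $M\ge N_M$. The theorem therefore gives
\[
	\E\|H_M\|_{\op}\le\rho_{*,M}+9\sqrt{\frac{nN_M}{M}}\,\Bigl\|\sum_{i=1}^nA_{i,M}^2\Bigr\|_{\op}^{1/2}.
\]

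Next we bound the row factor uniformly in $M$. By the triangle inequality and $\|A^2\|_{\op}=\|A\|_{\op}^2$, we have $\|\sum_i A_{i,M}^2\|_{\op}\le\sum_i\|A_{i,M}\|_{\op}^2\le nK^2$. The error term is therefore at most $9nK\sqrt{N_M/M}$. This tends to $0$ because $n$ and $K$ are fixed and $N_M/M\to0$. This proves $\E\|H_M\|_{\op}\le\rho_{*,M}+o(1)$, where the $o(1)$ term is explicit and depends only on $n$, $K$ and $N_M/M$.

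For the final assertion, take $\limsup_{M\to\infty}$ of both sides. If $\rho_{*,M}\to\rho$, the right-hand side converges to $\rho$, which gives $\limsup_M\E\|H_M\|_{\op}\le\rho$.

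Every step here is a routine limit, so there is no real obstacle. The only points needing care are that the inequality is used only for $M$ large enough that $M\ge N_M$, and that the row term is bounded using the uniform bound $K$ with fixed $n$ rather than any bound on $N_M$. The size of $A_{0,M}$ never enters the error term, so no assumption on it is needed.
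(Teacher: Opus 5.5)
Your proof is correct and is essentially the paper's argument: apply Theorem~\ref{thm:main} for $M$ large enough that $M\ge N_M$, bound $\bigl\|\sum_i A_{i,M}^2\bigr\|_{\op}^{1/2}\le\sqrt{n}\,K$, and conclude that the error term $9nK\sqrt{N_M/M}$ is $o(1)$. The $\limsup$ statement then follows immediately, as you note.
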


\begin{proof}
	By Theorem~\ref{thm:main},
	\[
		\E\|H_M\|_{\op}
		\le
		\rho_{*,M}
		+
		9\sqrt{\frac{nN_M}{M}}
		\left\|\sum_{i=1}^n A_{i,M}^2\right\|_{\op}^{1/2}.
	\]
	The last factor is at most \(\sqrt n\,K\), while \(n\) is fixed and
	\(N_M=o(M)\).  Hence the error term is \(o(1)\).
\end{proof}

When $n=N=1$ and $A_0=0$, the upper-edge part of our argument reduces to the
classical upper bound
\[
	\E\lambda_{\max}(M^{-1/2}G)
	\le 2+O(M^{-1/2})
\]
for a normalized GUE matrix $G$.  This scalar case is closely related to the
classical concentration-and-comparison estimates for Gaussian matrix norms; see,
for example, Davidson--Szarek~\cite[Section~II.c]{DavidsonSzarek2001} and the
2018 first edition of
Vershynin~\cite[Section~7.3]{Vershynin}.
For inhomogeneous matrices with independent entries, see also
van Handel~\cite[Section~4]{vanHandel2017SpectralNorm}.  In the present proof,
the Davidson--Szarek estimate enters at the end through a rectangular
block-Gaussian singular-value bound, while the full-process comparison is a
separate step.  Sharper non-asymptotic edge deviation estimates for the GUE,
which in particular imply
$\E\lambda_{\max}(M^{-1/2}G)\to2$, were obtained by
Ledoux~\cite{LedouxLargestEigenvalues} and Aubrun~\cite{AubrunLargestEigenvalue}.
The factor \(\Arow\) in the error term is the same row/column square-function
quantity that appears in the endpoint noncommutative Khintchine inequality.  For
a free semicircular family \((s_i)_{i=1}^n\), the classical operator-space form
of this inequality gives, for a universal constant \(K\),
\[
	\left\|\sum_i a_i\otimes s_i\right\|
	\le
	K\max\left\{
	\left\|\sum_i a_i^*a_i\right\|^{1/2},
	\left\|\sum_i a_ia_i^*\right\|^{1/2}
	\right\},
\]
see, for example, Pisier~\cite[(9.9.8)--(9.9.9)]{PisierOperatorSpace} and
Buchholz~\cite[Remark~1]{BuchholzKhintchine}; for self-adjoint coefficients
this reduces precisely to \(\Arow\).
Another point of the argument that may be of independent interest is
Propositions~\ref{prop:dual} and~\ref{prop:leftdual}, which give dual forms of
Lehner's edge quantities over density matrices.  After completion of this work,
we learned of independent related work of Kunisky~\cite{KuniskyLehnerSDP2026},
which develops semidefinite-programming formulations of Lehner's formulas and
also discusses such dual formulations.

The proof of Theorem~\ref{thm:main} proceeds through separate estimates for the
two spectral edges.  The first ingredient is the following upper-edge estimate.

\begin{restatable*}[Upper-edge estimate]{proposition}{upperedgeprop}
	\label{prop:upperedge}
	Assume that
	\[
		A_i\succeq0,\qquad i=1,\ldots,n,
	\]
	and assume $M\ge N$.  Then
	\[
		\E\lambda_{\max}(H_M)
		\le
		\rho_++6\sqrt{\frac{nN}{M}}\,\Arow.
	\]
\end{restatable*}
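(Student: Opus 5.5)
We bound $\lambda_{\max}(H_M)$ as the supremum of a Gaussian process and compare it with a simpler process whose supremum is controlled by the dual form of $\rho_+$ (Proposition~\ref{prop:dual}) and a rectangular Gaussian singular-value bound. Write
\[
\lambda_{\max}(H_M)=\sup_{u}\Big(\ip{u}{(A_0\otimes I_M)u}+X_u\Big),
\qquad
X_u=\frac1{\sqrt M}\sum_{i=1}^n\ip{u}{(A_i\otimes G_i)u},
\]
over unit vectors $u\in\C^N\otimes\C^M$. Identify $u$ with an $N\times M$ matrix $U$ with $\Tr UU^*=1$, and let $C=UU^*\in\DN$ be its reduced density matrix. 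Then $\ip{u}{(A_i\otimes G_i)u}=\Tr(U^*A_iU\,G_i^{T})$. The increments of the centered process $X$ are
\[
\E|X_u-X_v|^2=\frac1M\sum_i\|U^*A_iU-V^*A_iV\|_{HS}^2 .
\]

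\textbf{Comparison step.} Because $A_i\succeq0$, we can factor $U^*A_iU=(A_i^{1/2}U)^*(A_i^{1/2}U)$. Define the comparison process
\[
Y_u=\frac{2}{\sqrt M}\sum_i\Rea\Tr\big(W_i^*A_i^{1/2}U\big),
\]
where the $W_i$ are suitable independent complex Gaussian $N\times M$ matrices. The goal is a Sudakov--Fernique increment inequality $\E|X_u-X_v|^2\le\E|Y_u-Y_v|^2$, obtained from the elementary bound
\[
\|P^*P-Q^*Q\|_{HS}^2\le \big(\|P\|+\|Q\|\big)^2\|P-Q\|_{HS}^2
\]
together with a symmetrization. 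This is where positivity enters: for signed $A_i$ no such square root exists, which is precisely the failure mentioned in the abstract. I expect this to be the main obstacle. The increment inequality cannot be uniform with the factor $2$ unless the comparison is built around the correct variance profile. So I would first prove it in the normalized form and absorb the discrepancy into a term depending only on $C$. Concretely, the variance of $X_u$ is at most
\[
\frac1M\sum_i\Tr\big((A_iC)^2\big)\le\frac1M\Tr E_C .
\]
I would then treat the diagonal (variance) part separately, by the standard device of adding an independent $g\,\sigma_u$ term to equalize variances, where $g$ is a standard Gaussian. This contributes the $O(\sqrt{1/M})$ error.

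\textbf{Bounding the comparison supremum.} After Sudakov--Fernique we must bound
\[
\E\sup_U\Big[\Tr(A_0C)+\frac{2}{\sqrt M}\Rea\Tr(\mathcal W^*\mathcal A U)\Big],
\]
where $\mathcal W$ is the stacked $nN\times M$ Gaussian matrix and $\mathcal A U$ stacks the blocks $A_i^{1/2}U$. Split $\mathcal W$ into a deterministic-structure part and a fluctuation part. The fluctuation is controlled by the Davidson--Szarek estimate $\E\|\mathcal W\|_{\op}\le\sqrt M+\sqrt{nN}$. The $\sqrt M$ piece produces, after Cauchy--Schwarz in the $U$ variables and optimization over $U$ with fixed $C$, a term of order
\[
2\Big(\Tr\sum_iA_iC\,\cdots\Big)^{1/2},
\]
which is exactly $2\Tr(E_C^{1/2})$ in the dual formula. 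The $\sqrt{nN/M}$ piece gives the error, since $\|\mathcal AU\|_{HS}\le\|\sum A_i^2\|^{1/2}$. Matching constants should give the factor $6$.

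\textbf{Closing with duality.} The bound now has the form
\[
\E\lambda_{\max}(H_M)\le\sup_{C\in\DN}\big(\Tr(A_0C)+2\Tr E_C^{1/2}\big)+6\sqrt{nN/M}\,\Arow .
\]
Proposition~\ref{prop:dual}, which identifies this supremum with $\rho_+$, finishes the proof. Although $E_C=\sum_i(C^{1/2}A_iC^{1/2})^2$ is defined for all $A_i$, the comparison step producing it is only valid when the $A_i$ are positive semidefinite.
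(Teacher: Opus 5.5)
Your outline (Rayleigh quotient, Sudakov--Fernique, rectangular Gaussian bound, dual formula) is the paper's. The gap is the comparison step, which carries the whole proof and does not work as you set it up.

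\textbf{Why your comparison fails.} Take $W_i$ with law independent of the $A_i$. Your process $Y_u=\frac2{\sqrt M}\sum_i\Rea\Tr(W_i^*A_i^{1/2}U)$ has increments $\frac2M\sum_i\|A_i^{1/2}(U-V)\|_F^2$. These scale like $t$ under $A_i\mapsto tA_i$, while $\E|X_u-X_v|^2=\frac1M\sum_i\|U^*A_iU-V^*A_iV\|_F^2$ scales like $t^2$. So no increment domination can hold uniformly.

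The mismatch carries through the rest of your argument:
\begin{itemize}
\item Write $U=C^{1/2}W$ and use Lemma~\ref{lem:trace-norm-variational}. The supremum of your process over the fiber $UU^*=C$ is $\frac2{\sqrt M}\bigl\|C^{1/2}[A_1^{1/2}\ \cdots\ A_n^{1/2}]\mathcal W\bigr\|_1\approx2\Tr\bigl[(C^{1/2}(\sum_iA_i)C^{1/2})^{1/2}\bigr]$, not $2\Tr(E_C^{1/2})$. Your step ``which is exactly $2\Tr(E_C^{1/2})$'' is the conclusion you need, not a computation.
\item $\|\mathcal AU\|_F^2=\Tr(C\sum_iA_i)$ is controlled by $\|\sum_iA_i\|_{\op}$, not by $\|\sum_iA_i^2\|_{\op}$. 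For $A_i=tI_N$ with $t<1$ your claimed bound fails.
\item Rescaling $W_i$ to $A_i^{1/2}\Gamma_i$ fixes the homogeneity but not the functional. For $n=1$, $A_0=0$, $A_1=I_N$ the supremum becomes $\frac2{\sqrt M}\|\Gamma_1\|_F\approx2\sqrt N$, whereas $\rho_+=2$.
\end{itemize}

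Your fallbacks cannot recover the constant either. The triangle-inequality bound $\|P^*P-Q^*Q\|_F\le(\|P\|_{\op}+\|Q\|_{\op})\|P-Q\|_F$ is of the positivity-blind type discussed in Section~\ref{sec:signed}. Proposition~\ref{prop:signed-increment-failure} shows that without positivity the squared factor $4$ cannot be lowered to $2$, and factor $4$ only yields $2\sqrt2\,\Tr(E_C^{1/2})$. Adding $g\sigma_u$ to equalize variances is a Slepian/Gordon device. Sudakov--Fernique needs only increment domination, and a constant-factor loss there cannot be pushed into an $O(M^{-1/2})$ term.

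\textbf{What is missing.} You need the right comparison process together with an inequality that makes it sharp. Parametrize $U=C^{1/2}W$ with $C\in\DN$ and $W\in\St_{N,M}$ (Lemma~\ref{lem:density-stiefel}). Then $U^*A_iU=W^*B_i(C)W$ with $B_i(C)=C^{1/2}A_iC^{1/2}\succeq0$. Compare with
\[
Y_{C,W}=\frac2{\sqrt M}\sum_{i=1}^n\ip{\Gamma_i}{B_i(C)W}_F=\frac2{\sqrt M}\sum_{i=1}^n\Rea\Tr\bigl(\Gamma_i^*(UU^*)^{1/2}A_iU\bigr),
\]
which, like $X$, is linear in the $A_i$.

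Positivity enters through the identity $W^*PW=|PW|$ for $P\succeq0$ and $WW^*=I_N$. The Araki--Yamagami inequality $\||X|-|Y|\|_F\le\sqrt2\,\|X-Y\|_F$ then gives $\|W^*PW-V^*QV\|_F^2\le2\|PW-QV\|_F^2$ (Lemma~\ref{lem:twofiber}). This holds simultaneously across different fibers $C\neq C'$, and it is exactly the increment of $Y$.

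The rest then goes through:
\begin{itemize}
\item Apply Sudakov--Fernique with drift $\Tr(A_0C)$. The supremum over $W$ is $\|R_C\Gamma\|_1$, where $R_C=[B_1(C)\ \cdots\ B_n(C)]$ and $R_CR_C^*=E_C$.
\item Hence $M^{-1/2}\|R_C\Gamma\|_1\le M^{-1/2}\|\Gamma\|_{\op}\Tr(E_C^{1/2})$.
\item Use the pointwise bound $\sup_C\{\Tr(A_0C)+2\alpha\Tr(E_C^{1/2})\}\le\Psi+2(\alpha-1)_+B_{\max}$, with $\Psi,B_{\max}$ as in Proposition~\ref{prop:blockwishart}.
\item The positive-part Davidson--Szarek estimate with $d=nN$ and $B_{\max}\le\Arow$ then produce the constant $6$.
\item The easy direction $\Psi\le\rho_+$ of the dual formula closes the argument, as you say.
\end{itemize}
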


The second ingredient is the corresponding lower-edge estimate.  The dual
variational formula, proved in Proposition~\ref{prop:leftdual}, reads
\[
	\rho_-=
	\min_{C\in\DN}
	\left\{
	\Tr(A_0C)-2\Tr(E_C^{1/2})
	\right\}.
\]

\begin{restatable*}[Left-edge estimate]{proposition}{leftedgeprop}
	\label{prop:leftedge}
	Assume that
	\[
		A_i\succeq0,\qquad i=1,\ldots,n,
	\]
	and assume $M\ge N$.  Then
	\[
		\E\lambda_{\min}(H_M)
		\ge
		\rho_-
		-
		6\sqrt{\frac{nN}{M}}\,\Arow.
	\]
\end{restatable*}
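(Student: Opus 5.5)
The plan is to deduce this from Proposition~\ref{prop:upperedge} by a sign symmetry, with no new probabilistic input; the dual formula of Proposition~\ref{prop:leftdual} is not needed here. First I would use $\lambda_{\min}(H_M)=-\lambda_{\max}(-H_M)$ and write
\[
	-H_M=(-A_0)\otimes I_M+\frac1{\sqrt M}\sum_{i=1}^n A_i\otimes(-G_i).
\]
The GUE law is invariant under $G\mapsto -G$, and the $G_i$ are independent. Hence $(-G_1,\ldots,-G_n)$ has the same joint law as $(G_1,\ldots,G_n)$. Therefore $-H_M$ has the same law as the matrix
\[
	H_M'=(-A_0)\otimes I_M+\frac1{\sqrt M}\sum_{i=1}^n A_i\otimes G_i,
\]
which is built from the family $(-A_0,A_1,\ldots,A_n)$. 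It follows that $\E\lambda_{\min}(H_M)=-\E\lambda_{\max}(H_M')$.

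The new family still satisfies the hypotheses of Proposition~\ref{prop:upperedge}. Its coefficients $A_i$, $i\ge1$, are unchanged and positive semidefinite, and $-A_0$ is Hermitian. Its error factor $\Arow$ is also the same, since it does not involve $A_0$. Proposition~\ref{prop:upperedge} therefore gives
\[
	\E\lambda_{\max}(H_M')\le\rho_+' +6\sqrt{\frac{nN}{M}}\,\Arow,
\]
where $\rho_+'$ denotes the right Lehner edge \eqref{eq:rho-plus} of the family $(-A_0,A_1,\ldots,A_n)$.

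It remains to identify $\rho_+'=-\rho_-$, which is a direct computation with the variational formulas. For $Z\succ0$, put $W=-Z\prec0$. Then $W^{-1}=-Z^{-1}$, and
\[
	-A_0+Z+\sum_iA_iZ^{-1}A_i=-\Bigl(A_0+W+\sum_iA_iW^{-1}A_i\Bigr).
\]
Taking the largest eigenvalue on the left corresponds to taking minus the smallest eigenvalue on the right. As $Z$ ranges over all $Z\succ0$, $W$ ranges over all $W\prec0$, so an infimum over $Z$ becomes minus a supremum over $W$. This gives
\[
	\rho_+'=-\sup_{W\prec0}\lambda_{\min}\Bigl(A_0+W+\sum_iA_iW^{-1}A_i\Bigr)=-\rho_-
\]
by \eqref{eq:rho-minus}.

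Combining the three steps yields
\[
	\E\lambda_{\min}(H_M)\ge\rho_--6\sqrt{\frac{nN}{M}}\,\Arow,
\]
as claimed. The only point needing care is the distributional symmetry of the GUE family, which holds because the GUE density depends only on $\Tr G^2$. The statement then follows entirely from Proposition~\ref{prop:upperedge}.
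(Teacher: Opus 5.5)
Your proposal is correct and follows essentially the same argument as the paper: use the symmetry $G_i\stackrel{d}{=}-G_i$ and apply the upper-edge bound to the family $(-A_0,A_1,\ldots,A_n)$. The only difference is bookkeeping. The paper invokes Proposition~\ref{prop:blockwishart} and then Proposition~\ref{prop:leftdual}, whereas you invoke Proposition~\ref{prop:upperedge} and identify $\rho_+'=-\rho_-$ via $Z\mapsto -Z$, which is exactly the identity used inside the proof of Proposition~\ref{prop:leftdual}.
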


The asymptotic consequence of Theorem~\ref{thm:main} should be viewed in the
context of the strong convergence phenomenon for random matrices; see
\cite{vanHandelSurvey2025} for a recent survey.  In its greatest generality, this
asymptotic statement is not new.  Recent work of Chen--Garza-Vargas--van Handel
\cite[Theorem~1.1 and Corollary~1.2]{CGVvHClassical} proves strong convergence
for polynomials in GUE, GOE, and GSE matrices with matrix coefficients of
dimension $D_M=\exp(o(M))$.  This covers, in particular, the regime $N=o(M)$
considered here, and does so even for signed coefficients and arbitrary
fixed-degree noncommutative polynomials.  This work builds on the general
framework introduced in~\cite{CGVTvHClassicalI}.  In the earlier history of
growing coefficient dimensions, Pisier observed that the quantitative
Haagerup--Thorbj\o rnsen method yields
\(N=o(M^{1/4})\)~\cite[(0.9)]{PisierSubexponentialOperatorSpaces}, while the
matrix-concentration method of Bandeira--Boedihardjo--van Handel reaches
\(N=o(M/\log^3M)\) for arbitrary signed
coefficients~\cite[Theorem~2.10]{BBvH2023}.  Other results include
Collins--Guionnet--Parraud~\cite{CollinsParraudGuionnet}, which gave the
tensor-coefficient regime $N=o(M^{1/3})$, and the subsequent work of
Parraud~\cite{ParraudTensor2024}, which reaches $N=\exp(o(M^{2/3}))$.

There are two relevant non-asymptotic comparisons.  First, for coefficient
dimension \(N=M^{O(1)}\), \cite[Corollary~1.4]{CGVvHClassical} gives the
one-sided relative upper estimate
\[
	\|P_M(G^M)\|_{\op}
	\le
	\left(
		1+O_{\mathrm P}\!\left(\sqrt{\frac{\log M}{M}}\right)
	\right)
	\|P_M(s)\|_{\op}
\]
for arbitrary signed coefficients and, more generally, for fixed-degree
noncommutative polynomials.  The displayed rate contains no explicit
coefficient-dimension factor once the polynomial growth regime is fixed.  In
additive terms, its error scale is
\[
	O_{\mathrm P}\!\left(
	\sqrt{\frac{\log M}{M}}\,\|P_M(s)\|_{\op}
	\right).
\]
Consequently, when \(\|P_M(s)\|_{\op}\asymp\sigma\), comparison with our bound
suggests the crossover \(rN\asymp\log M\).  No such unconditional crossover
holds when, for example, the deterministic term \(A_0\) dominates.

Second, \cite[Corollary~2.3]{BCSvH2026}, applied to the present model, gives,
with \(\sigma=\|\sum_iA_i^2\|_{\op}^{1/2}\), a two-sided estimate of order
\[
	C\sigma\left(\frac NM\right)^{1/4}
	\bigl(\log(NM)\bigr)^{3/4}
\]
for a universal constant \(C\), both for the norm and for either spectral edge.
It applies to arbitrary signed coefficients and does not require \(M\ge N\).
In comparison, for a linear self-adjoint pencil whose covariance map admits a
positive semidefinite Kraus decomposition of length \(r\), our direct
concentration argument gives the one-sided finite-dimensional estimate
\[
	\E\|H_M\|_{\op}
	\le
	\left\|A_0\otimes I+\sum_i A_i\otimes s_i\right\|_{\op}
	+
	O\!\left(
	\sqrt{\frac{rN}{M}}\left\|\eta(I_N)\right\|_{\op}^{1/2}
	\right),
\]
where the free norm is identified explicitly by Lehner's variational formula.
Thus our error has the better exponent \(M^{-1/2}\), no logarithmic factor, and
an explicit constant, at the price of a one-sided conclusion and the
positive-semidefinite-Kraus restriction.  Compared with the one-sided relative
upper estimate of \cite[Corollary~1.4]{CGVvHClassical}, it also has three useful
features: it is additive rather than multiplicative; it is an expectation bound
with an explicit constant rather than a bound in probability with an unspecified
one; and its error term involves only the covariance part, not \(A_0\), so it
remains effective in the regime \(\|A_0\|_{\op}\gg\sigma\) allowed by
Corollary~\ref{cor:asymptotic}.  It gives a short quantitative route to the
$N=o(M)$ upper bound, without a master equation or resolvent asymptotic
expansions.  The displayed coefficients themselves need not be positive, but
the covariance invariance in Section~\ref{sec:rekraus} shows that this criterion
is a certificate for applying the positive-family theorem, not a new class of
random-matrix laws.  The result should therefore not be read as an input to a
linearization proof of general strong convergence, where signed coefficients
and full spectral convergence are unavoidable.  Rather, the geometry of the
underlying Gaussian processes already produces the free-probability edge
formulas for this class of block models.

This paper is organized as follows.  The next section sets up notation and states
some preliminary lemmas.  Section~\ref{sec:variational} proves dual variational
formulas for $\rho_+$ and $\rho_-$ over density matrices.
Section~\ref{sec:full-process} proves the full-process comparison estimate of
Proposition~\ref{prop:fullprocess}, which is the main technical ingredient in the
proof of Proposition~\ref{prop:upperedge}.  Section~\ref{sec:block-wishart} gives
the block-Wishart estimate needed to control the comparison process and proves
Proposition~\ref{prop:upperedge}.  Section~\ref{sec:left-edge} records the
corresponding lower-edge estimate.  Section~\ref{sec:op-norm} establishes the
Lipschitz control of the operator norm and proves Theorem~\ref{thm:main}.
Section~\ref{sec:rekraus} uses the covariance invariance of the model to extend
the theorem to maps admitting a positive semidefinite Kraus decomposition.
Finally, Section~\ref{sec:signed} identifies the failure of the specific
increment comparison for arbitrary signed Hermitian coefficients.

\section{Setup and preliminaries}
\label{sec:setup}

Throughout, $A_0,A_1,\ldots,A_n\in M_N(\C)$ are Hermitian.  Positivity
assumptions are stated explicitly in the results where they are used.  We write
$\|\cdot\|_{\op}$ for the operator norm,
$\|X\|_1=\Tr|X|$ for the trace norm, and
\[
	\ip{X}{Y}_F=\Rea\Tr(X^*Y)
\]
for the real Frobenius inner product.

The GUE matrices $G_i$ are normalized by
\[
	\E\ip{G_i}{K}_F^2=\|K\|_F^2
\]
for every Hermitian $K\in M_M(\C)$.  Equivalently, diagonal entries are real
standard Gaussians and off-diagonal real and imaginary parts have variance
$1/2$.  Throughout the paper, a standard complex Gaussian means a random
variable of the form
\[
	X+iY,
\]
where $X,Y$ are independent real Gaussians with law $N(0,1/2)$, and
$N(\mu,\sigma^2)$ denotes the real Gaussian distribution with mean $\mu$ and
variance $\sigma^2$.  Thus, if $\Gamma$ is a standard complex Gaussian matrix
and $L$ is a deterministic matrix of the same size, then
\[
	\E\ip{\Gamma}{L}_F^2
	=
	\frac12\|L\|_F^2.
\]
Equivalently, $\E|\Tr(\Gamma^*L)|^2=\|L\|_F^2$.

Recall that \(\DN\) denotes the density matrices.  With \(E_C\) as in
\eqref{eq:EC}, define
\[
	\phi(C)=\Tr(A_0C)+2\Tr(E_C^{1/2}).
\]
We identify $\C^N\otimes\C^M$ with $\C^{N\times M}$ by
\[
	\vecop(S)=\sum_{a,b}S_{ab}(e_a\otimes e_b).
\]
Then
\[
	(A\otimes G)\vecop(S)=\vecop(ASG^{\top}).
\]
Consequently, for Hermitian $A$ and $G$,
\[
	\ip{\vecop(S)}{(A\otimes G)\vecop(S)}
	=\Tr(S^*ASG^{\top}).
\]
Here the unsubscripted brackets denote the standard complex Hilbert-space
inner product on \(\C^N\otimes\C^M\).

For $M\ge N$ set
\[
	\St_{N,M}=\{W\in M_{N,M}(\C): WW^*=I_N\}.
\]
Thus elements of $\St_{N,M}$ are coisometries.

\begin{lemma}[Variational characterization of the trace norm]
	\label{lem:trace-norm-variational}
	Let $A\in M_{N,M}(\C)$ and assume $M\ge N$.  Then
	\[
		\|A\|_1=\sup_{W\in\St_{N,M}}\Rea\Tr(A^*W).
	\]
\end{lemma}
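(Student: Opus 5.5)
The plan is to use the singular value decomposition of \(A\) and prove the two inequalities separately. Write \(A=U\Sigma V^*\) with \(U\in M_N(\C)\) unitary, \(\Sigma=\mathrm{diag}(s_1,\dots,s_N)\in M_N(\C)\) having nonnegative diagonal entries, and \(V\in M_{M,N}(\C)\) with orthonormal columns, \(V^*V=I_N\). This thin form exists because \(M\ge N\). Then \(\|A\|_1=\sum_k s_k=\Tr\Sigma\).

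For the lower bound, take \(W_0=UV^*\). Then \(W_0W_0^*=UV^*VU^*=UU^*=I_N\), so \(W_0\in\St_{N,M}\). Moreover,
\[
\Tr(A^*W_0)=\Tr(V\Sigma U^*UV^*)=\Tr(V\Sigma V^*)=\Tr(\Sigma V^*V)=\Tr\Sigma .
\]
Hence the supremum is at least \(\|A\|_1\), and this choice of \(W_0\) attains it.

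For the upper bound, fix \(W\in\St_{N,M}\). By cyclicity,
\[
\Tr(A^*W)=\Tr(V\Sigma U^*W)=\Tr(\Sigma\,U^*WV).
\]
Set \(Q=U^*WV\in M_N(\C)\). Since \(\|W\|_{\op}=1\) (because \(WW^*=I_N\)), \(\|V\|_{\op}=1\), and \(U\) is unitary, we get \(\|Q\|_{\op}\le1\), so every diagonal entry satisfies \(|Q_{kk}|\le1\). Therefore
\[
\Rea\Tr(\Sigma Q)=\sum_k s_k\Rea Q_{kk}\le\sum_k s_k=\|A\|_1 .
\]
Combining the two bounds gives equality, and the supremum is in fact a maximum.

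There is no real obstacle here. The only point needing care is the dimension bookkeeping: we must use the thin SVD with \(V^*V=I_N\) so that \(UV^*\) is a coisometry, and this is exactly where the hypothesis \(M\ge N\) enters.
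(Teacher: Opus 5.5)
Your proof is correct and follows essentially the paper's approach. The paper cites trace-norm/operator-norm duality (or von Neumann's trace inequality) for the upper bound, and obtains attainment by extending the SVD partial isometry to an element of $\St_{N,M}$; you instead prove the upper bound directly through the contraction $Q=U^*WV$, and the thin SVD gives you the coisometry $UV^*$ without a separate extension step.
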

\begin{proof}
	This is an immediate consequence of the duality between the Schatten
	$1$-norm and the spectral norm~\cite[(1.173)]{Watrous2018}, together with the
	singular value theorem~\cite[Theorem~1.6]{Watrous2018}.  Equivalently, it
	follows from von Neumann's trace inequality in the form of
	\cite[Theorem~IV.2.5]{Bhatia1997}.  The supremum is attained, for instance
	by taking the partial isometry from the singular value decomposition of
	\(A\) and extending it to an element of \(\St_{N,M}\).
\end{proof}

\begin{definition}[Gaussian process]
	Let $T$ be an index set. A (real-valued) Gaussian process indexed by $T$ is a
	family of real random variables $(X_t)_{t\in T}$ such that for every
	$t_1,\ldots,t_k\in T$, the random vector
	\[
		(X_{t_1},\ldots,X_{t_k})
	\]
	is jointly Gaussian. The process is centered if $\E X_t=0$ for every
	$t\in T$.
\end{definition}

\begin{theorem}[Sudakov--Fernique with deterministic drift]
	\label{thm:sudakov-fernique-drift}
	Let $T$ be a compact metric space. Let $(X_t)_{t\in T}$ and
	$(Y_t)_{t\in T}$ be centered Gaussian processes whose sample paths are
	almost surely continuous. Suppose that for every $s,t\in T$,
	\[
		\E\bigl[(X_s-X_t)^2\bigr]
		\le
		\E\bigl[(Y_s-Y_t)^2\bigr].
	\]
	Then, for every continuous deterministic function $a:T\to\mathbb R$,
	\[
		\E \sup_{t\in T}\{a(t)+X_t\}
		\le
		\E \sup_{t\in T}\{a(t)+Y_t\}.
	\]
\end{theorem}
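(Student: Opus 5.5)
We reduce to a finite index set by interpolating between the two processes, and then pass to the compact set $T$ by a monotone limit. Throughout, the drift $a$ is carried along as a deterministic shift inside a smooth maximum. The Gaussian integration-by-parts computation only involves second derivatives of that smooth function, so the shift does not affect it.

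\emph{Step 1: finite $T$.} Let $T=\{t_1,\ldots,t_k\}$. First assume $X$ and $Y$ are independent; this is harmless because the conclusion involves only the two marginal laws. For $\beta>0$ set
\[
F_\beta(x)=\beta^{-1}\log\sum_{t\in T}e^{\beta(a(t)+x_t)},
\qquad
p_t(x)=\frac{e^{\beta(a(t)+x_t)}}{\sum_s e^{\beta(a(s)+x_s)}}.
\]
Then $\max_t\{a(t)+x_t\}\le F_\beta(x)\le \max_t\{a(t)+x_t\}+\beta^{-1}\log k$. Put $Z(u)=\sqrt u\,X+\sqrt{1-u}\,Y$ and $\varphi(u)=\E F_\beta(Z(u))$. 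Gaussian integration by parts gives
\[
\varphi'(u)=\tfrac12\sum_{s,t}\bigl(\E X_sX_t-\E Y_sY_t\bigr)\,\E\,\partial_{st}F_\beta(Z(u)),
\qquad
\partial_{st}F_\beta=\beta\,(p_s\delta_{st}-p_sp_t).
\]
Write $\gamma_{st}=\E X_sX_t-\E Y_sY_t$ and $\Delta_{st}=\E(X_s-X_t)^2-\E(Y_s-Y_t)^2$, so that $\gamma_{st}=\tfrac12(\gamma_{ss}+\gamma_{tt}-\Delta_{st})$. The Hessian has zero row sums, $\sum_t\partial_{st}F_\beta=0$, so the diagonal parts of $\gamma$ cancel. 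What remains is
\[
\varphi'(u)=\frac{\beta}{4}\sum_{s\ne t}\Delta_{st}\,\E[p_sp_t]\le 0,
\]
using the hypothesis $\Delta_{st}\le0$. Hence $\E F_\beta(X)=\varphi(1)\le\varphi(0)=\E F_\beta(Y)$. Letting $\beta\to\infty$ and using the uniform sandwich above gives the finite statement. Integrability is clear, since the supremum of finitely many Gaussians is integrable.

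\emph{Step 2: compact $T$.} Choose finite sets $T_1\subset T_2\subset\cdots$ whose union is dense in $T$. The sample paths of $X$ and the function $a$ are continuous, so almost surely
\[
\sup_{T_k}(a+X)\uparrow\sup_T(a+X),
\]
and likewise for $Y$. To pass expectations to the limit I will use monotone convergence. For this I need
\[
\E\sup_T|X_t|<\infty .
\]
This follows from Fernique's theorem, or from Borell--TIS: the supremum is almost surely finite, being the maximum of a continuous function on a compact set, and its variance proxy $\sup_t\E X_t^2$ is finite by continuity. The terms are bounded below by the integrable variable $a(t_1)+X_{t_1}$, so monotone convergence applies on both sides. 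Applying Step 1 on each $T_k$ and letting $k\to\infty$ gives the claim.

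\emph{Main obstacle.} The only delicate point is the algebra in Step 1 that converts covariance differences into increment differences. This uses the zero-row-sum identity of the softmax Hessian, and it is exactly what lets the hypothesis on increments suffice even though the variances of $X$ and $Y$ may differ. The drift plays no role in this computation because it appears only inside the weights $p_t$. The limiting arguments in $\beta$ and in $k$ are routine once integrability of the supremum is secured.
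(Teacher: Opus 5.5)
Your proof is correct, and it differs from the paper mainly in where the work is done. The paper proves nothing at the finite level. It cites Vitale's deterministic-shift extension of Sudakov--Fernique for denumerable index sets, and then remarks only that continuity of the sample paths lets one restrict to a countable dense subset of $T$. You instead derive the finite-index inequality yourself, by the log-sum-exp interpolation argument in the style of Chatterjee's proof of Sudakov--Fernique, with the drift entering only through the weights $p_t$. Your identity $\varphi'(u)=\frac{\beta}{4}\sum_{s\ne t}\Delta_{st}\,\E[p_sp_t]$ is right. It shows explicitly why the drift is harmless, and why only increments need to be compared: the zero row sums of the softmax Hessian cancel the variance terms $\gamma_{ss}$. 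Your Step~2 is the paper's countable-dense reduction made explicit through nested finite sets and monotone convergence. The citation buys brevity and a statement valid for arbitrary countable index sets without any path regularity. Your route buys a self-contained argument. One minor remark: the appeal to Fernique or Borell--TIS is not needed to pass to the limit. Monotone convergence with the integrable lower bound $a(t_1)+X_{t_1}$ already gives $\E\sup_{T_k}(a+X)\uparrow\E\sup_T(a+X)$ in $(-\infty,\infty]$, and likewise for $Y$, so the inequality passes to the limit. Integrability of the supremum only ensures that both sides are finite.
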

\begin{proof}
	This is Vitale's deterministic-shift extension of the
	Sudakov--Fernique comparison: see \cite[(4), Section 1]{Vitale}.
	Strictly speaking, Vitale states the result for denumerable index sets;
	the compact-continuous version follows by restricting to a countable
	dense subset of \(T\), using the continuity of the sample paths.
\end{proof}

Our use of Theorem~\ref{thm:sudakov-fernique-drift} follows a fixed strategy.
First, we rewrite $\lambda_{\max}(H_M)$ as the supremum of a Gaussian process
indexed by density matrices and Stiefel coisometries; see
Lemma~\ref{lem:density-stiefel}.  We then compare the original process
$X_{C,W}$ in \eqref{eq:original-process} with the comparison process
$Y_{C,W}$ in \eqref{eq:comparison-process}, whose supremum can be evaluated
through the trace-norm variational formula.
Second, we verify the increment domination required in
Theorem~\ref{thm:sudakov-fernique-drift}.  For positive coefficients this
verification is reduced to the deterministic two-fiber estimate of
Lemma~\ref{lem:twofiber}, and is carried out in the proof of
Proposition~\ref{prop:fullprocess}.

\section{Variational formulas over density matrices}
\label{sec:variational}

The goal of this section is to prove dual variational formulas for the free
edges $\rho_+$ and $\rho_-$ over density matrices.  These formulas identify the
deterministic functional produced by the comparison process with the
variational edge quantities.

In this section, we will need to exchange the order of a supremum and an infimum in a variational formula. For this, we recall the following minimax theorem of Sion~\cite{Sion}.
The minimax step below may be viewed as a matrix-coefficient analogue of the
scalar/diagonal minimax calculation in \cite[Lemma~3.2]{BBvH2023}.

\begin{lemma}[Sion's minimax theorem]
	\label{lem:sion}
	Let $X$ be a convex subset of a topological vector space,
	$Y$ be a convex subset of a topological vector space,
	and suppose either $X$ or $Y$ is compact.
	Suppose
	$f:X\times Y\to\mathbb R$ is such that, for each $y\in Y$, the map
	$x\mapsto f(x,y)$ is upper semicontinuous and concave, and for each
	$x\in X$, the map $y\mapsto f(x,y)$ is lower semicontinuous and convex.
	Then
	\[
		\sup_{x\in X}\inf_{y\in Y} f(x,y)
		=
		\inf_{y\in Y}\sup_{x\in X} f(x,y).
	\]
\end{lemma}
\begin{proof}
	See Sion~\cite{Sion}.
\end{proof}

For $Z\succ0$ define
\[
	B(Z)=A_0+Z+\sum_{i=1}^n A_iZ^{-1}A_i.
\]
Writing
\[
	f(C,Z)=\Tr(A_0C)+\Tr(CZ)+\Tr(\eta(C)Z^{-1}),
\]
we have
\[
	\rho_+=\inf_{Z\succ0}\sup_{C\in\DN}f(C,Z).
\]

\begin{lemma}[Weighted trace AM--GM]
	\label{lem:weighted-amgm}
	For $C,D\succeq0$,
	\[
		\inf_{Z\succ0}\{\Tr(CZ)+\Tr(DZ^{-1})\}
		=2\Tr\bigl[(C^{1/2}DC^{1/2})^{1/2}\bigr].
	\]
\end{lemma}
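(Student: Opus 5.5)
We prove the identity first for invertible \(C\), by an explicit minimizer, and then pass to general \(C\succeq0\) by a perturbation argument.

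\textbf{Positive definite case.} Assume first \(C\succ0\), and set
\[
	P=(C^{1/2}DC^{1/2})^{1/2}\succeq0.
\]
We rewrite the objective in the variable \(Y=C^{1/2}ZC^{1/2}\succ0\). Since \(Z^{-1}=C^{1/2}Y^{-1}C^{1/2}\),
\[
	\Tr(CZ)+\Tr(DZ^{-1})=\Tr(Y)+\Tr(P^2Y^{-1}).
\]
The map \(Z\mapsto Y\) is a bijection of the positive cone, so it suffices to show
\[
	\inf_{Y\succ0}\{\Tr Y+\Tr(P^2Y^{-1})\}=2\Tr P.
\]

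\textbf{Lower bound.} Write
\[
	\Tr Y+\Tr(P^2Y^{-1})-2\Tr P
	=\Tr\bigl[(Y^{1/2}-PY^{-1/2})^*(Y^{1/2}-PY^{-1/2})\bigr].
\]
Expanding the right-hand side gives \(\Tr Y+\Tr(Y^{-1/2}P^2Y^{-1/2})-\Tr P-\Tr P\), which is the left-hand side by cyclicity. The right-hand side is \(\ge0\), so the objective is always at least \(2\Tr P\).

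\textbf{Upper bound.} If \(P\succ0\), the choice \(Y=P\) gives exactly \(2\Tr P\). If \(P\) is singular, take \(Y=P+\varepsilon I\). The objective is then \(\Tr P+N\varepsilon+\Tr\bigl(P^2(P+\varepsilon I)^{-1}\bigr)\), and since \(P^2(P+\varepsilon I)^{-1}\preceq P\) this is at most \(2\Tr P+N\varepsilon\). Letting \(\varepsilon\to0\) gives the infimum \(2\Tr P\).

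\textbf{General \(C\succeq0\).} Here the substitution breaks down, and this is the main step to handle carefully.

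For the lower bound, note that the objective is monotone in \(C\). So for \(Z\succ0\),
\[
	\Tr(CZ)+\Tr(DZ^{-1})\ge\Tr(C_\delta Z)+\Tr(DZ^{-1})-\delta\Tr Z,
\]
where \(C_\delta=C+\delta I\). This route is awkward because the correction term depends on \(Z\).

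A better route is to compare directly. For each \(Z\succ0\), set \(W=Z^{1/2}\) and \(X=W^{-1}DW^{-1}\). Then
\[
	\Tr(CZ)+\Tr(DZ^{-1})=\Tr(WCW)+\Tr X .
\]
The operator inequality \(\Tr(A)+\Tr(B)\ge2\Tr\bigl[(B^{1/2}AB^{1/2})^{1/2}\bigr]\) for \(A,B\succeq0\) follows from the case already proved, applied to \(B+\delta I\) and then letting \(\delta\to0\), since both sides are continuous in \(B\). Apply it with \(A=WCW\) and \(B=X\), after conjugating back. Concretely, \((C^{1/2}DC^{1/2})^{1/2}\) and \(\bigl((WCW)^{1/2}X(WCW)^{1/2}\bigr)^{1/2}\) have the same trace, because the singular values of \(C^{1/2}D^{1/2}=C^{1/2}W\cdot W^{-1}D^{1/2}\) coincide with those of the corresponding product. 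This gives the lower bound for all \(C\succeq0\).

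For the upper bound, choose \(Z\) to be the optimizer for \(C_\delta\). Then
\[
	\Tr(CZ)+\Tr(DZ^{-1})\le\Tr(C_\delta Z)+\Tr(DZ^{-1})\le2\Tr\bigl[(C_\delta^{1/2}DC_\delta^{1/2})^{1/2}\bigr]+\delta',
\]
and the right-hand side tends to \(2\Tr P\) by continuity of \(C\mapsto\Tr\bigl[(C^{1/2}DC^{1/2})^{1/2}\bigr]\).

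The point needing the most care is the singular-value identity \(\Tr\bigl[(C^{1/2}DC^{1/2})^{1/2}\bigr]=\|C^{1/2}D^{1/2}\|_1\). Once this trace-norm form is in hand, the lower bound also follows more directly: by Lemma~\ref{lem:trace-norm-variational}, and then Cauchy--Schwarz and AM--GM,
\[
	\|C^{1/2}D^{1/2}\|_1=\|C^{1/2}Z^{1/2}\,Z^{-1/2}D^{1/2}\|_1\le\|C^{1/2}Z^{1/2}\|_F\|Z^{-1/2}D^{1/2}\|_F\le\tfrac12\bigl(\Tr(CZ)+\Tr(DZ^{-1})\bigr).
\]
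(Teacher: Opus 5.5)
Your proof is correct. In the positive-definite case and in the upper bound for singular $C$ it matches the paper: the substitution $Y=C^{1/2}ZC^{1/2}$, the completed square, the near-minimizer $Y=P+\varepsilon I$, and comparison with $C_\delta=C+\delta I$ via $\Tr(CZ)\le\Tr(C_\delta Z)$.

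The difference is the lower bound for singular $C$. The paper uses exactly the route you set aside as awkward. For each fixed $Z\succ0$ it applies the positive-definite case to $C_\delta$, which gives $\Tr(CZ)+\Tr(DZ^{-1})+\delta\Tr Z\ge 2\Tr[(C_\delta^{1/2}DC_\delta^{1/2})^{1/2}]$. It then lets $\delta\downarrow0$ with $Z$ still fixed, and only afterwards takes the infimum over $Z$. So the $Z$-dependence of the correction term causes no difficulty.

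Your conjugation argument is also valid, but the trace identity you invoke needs the polar decompositions. These give $(WCW)^{1/2}=UC^{1/2}W$ and $X^{1/2}=W^{-1}D^{1/2}V$ with $U,V$ unitary, so that $(WCW)^{1/2}X^{1/2}=U\,C^{1/2}D^{1/2}\,V$.

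Your final display is the cleanest route of the three. The Schatten Cauchy--Schwarz inequality (the same one the paper uses in Proposition~\ref{prop:blockwishart}) plus scalar AM--GM gives $2\|C^{1/2}D^{1/2}\|_1\le\Tr(CZ)+\Tr(DZ^{-1})$ for all $C\succeq0$ and $Z\succ0$, with no perturbation and no change of variables. The identity $\Tr[(C^{1/2}DC^{1/2})^{1/2}]=\|C^{1/2}D^{1/2}\|_1$, which you flag as delicate, is immediate from $C^{1/2}DC^{1/2}=(D^{1/2}C^{1/2})^*(D^{1/2}C^{1/2})$.

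With that bound, the completed-square step and the whole singular-case lower-bound discussion become unnecessary; only an explicit near-minimizer for the upper bound remains. The paper's version instead uses one elementary tool, the completed square, throughout, and handles singularity by a single uniform perturbation.
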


\begin{proof}
	If $C\succ0$, put $Y=C^{1/2}ZC^{1/2}$.  Then
	\[
		\Tr(CZ)+\Tr(DZ^{-1})
		=\Tr Y+\Tr(C^{1/2}DC^{1/2}Y^{-1}).
	\]
	The matrix trace AM--GM identity
	\[
		\inf_{Y\succ0}\{\Tr Y+\Tr(EY^{-1})\}=2\Tr(E^{1/2})
	\]
	follows by expanding
	\[
		\Tr\bigl[(Y^{1/2}-E^{1/2}Y^{-1/2})(Y^{1/2}-E^{1/2}Y^{-1/2})^*\bigr]\ge0
	\]
	and then taking $Y=E^{1/2}+\eta I$, $\eta\downarrow0$.

	For singular $C\succeq0$, the upper bound follows from the positive-definite
	case applied to $C+\eta I$.  Since $C+\eta I\succeq C$,
	\[
		\inf_{Z\succ0}\{\Tr(CZ)+\Tr(DZ^{-1})\}
		\le
		\inf_{Z\succ0}\{\Tr((C+\eta I)Z)+\Tr(DZ^{-1})\}
		=
		2\Tr\bigl[((C+\eta I)^{1/2}D(C+\eta I)^{1/2})^{1/2}\bigr],
	\]
	and then $\eta\downarrow0$.  For the lower bound, apply the positive-definite
	case to $C+\eta I$.  For every fixed $Z\succ0$,
	\[
		\Tr(CZ)+\Tr(DZ^{-1})+\eta\Tr(Z)
		\ge
		2\Tr\bigl[((C+\eta I)^{1/2}D(C+\eta I)^{1/2})^{1/2}\bigr].
	\]
	Letting $\eta\downarrow0$ gives the desired lower bound for this fixed $Z$,
	and then taking the infimum over $Z$ proves the singular case.
\end{proof}

\begin{proposition}[Dual form of Lehner's right edge]
	\label{prop:dual}
	For Hermitian $A_0,A_1,\ldots,A_n$,
	\[
		\rho_+=\,\max_{C\in\DN}\phi(C).
	\]
\end{proposition}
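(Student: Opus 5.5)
The argument is a direct application of Sion's minimax theorem (Lemma~\ref{lem:sion}) to the function
\[
f(C,Z)=\Tr(A_0C)+\Tr(CZ)+\Tr(\eta(C)Z^{-1}),\qquad C\in\DN,\ Z\succ0,
\]
followed by the weighted trace AM--GM identity (Lemma~\ref{lem:weighted-amgm}) applied to the inner infimum.

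First, I record that \(\rho_+=\inf_{Z\succ0}\sup_{C\in\DN}f(C,Z)\). This holds because \(\lambda_{\max}(B)=\max_{C\in\DN}\Tr(BC)\) for Hermitian \(B\), and \(\Tr(CA_iZ^{-1}A_i)=\Tr(A_iCA_iZ^{-1})\), so the sum over \(i\) gives \(\Tr(\eta(C)Z^{-1})\).

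Next I check the hypotheses of Lemma~\ref{lem:sion}, with \(X=\DN\) (compact and convex) and \(Y=\{Z\succ0\}\) (convex).
\begin{itemize}
\item For fixed \(Z\), the map \(C\mapsto f(C,Z)\) is affine, because \(\eta\) is linear. It is therefore concave and continuous.
\item For fixed \(C\), the map \(Z\mapsto\Tr(CZ)\) is linear. The map \(Z\mapsto\Tr(DZ^{-1})\), with \(D=\eta(C)\succeq0\), is convex on \(Z\succ0\), since matrix inversion is operator convex and \(D\succeq0\). Both maps are continuous on the open cone.
\end{itemize}
Sion's theorem then gives
\[
\rho_+=\sup_{C\in\DN}\inf_{Z\succ0}f(C,Z).
\]

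For the inner infimum I apply Lemma~\ref{lem:weighted-amgm} with \(D=\eta(C)\). This gives
\[
\inf_{Z\succ0}f(C,Z)=\Tr(A_0C)+2\Tr\bigl[(C^{1/2}\eta(C)C^{1/2})^{1/2}\bigr]=\phi(C).
\]
Hence \(\rho_+=\sup_{C\in\DN}\phi(C)\).

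It remains to show that the supremum is attained. The map \(C\mapsto E_C=C^{1/2}\eta(C)C^{1/2}\) is continuous on \(\DN\), because \(C\mapsto C^{1/2}\) is continuous on positive semidefinite matrices. The map \(E\mapsto\Tr E^{1/2}\) is also continuous, so \(\phi\) is continuous on the compact set \(\DN\) and the maximum is attained. Alternatively, \(\phi\) is an infimum of affine functions of \(C\), hence upper semicontinuous, which also suffices.

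The only point that needs care is the minimax step. Sion's theorem requires just one of the two sets to be compact, and \(\DN\) is, so the non-compactness of the cone \(\{Z\succ0\}\) causes no difficulty.
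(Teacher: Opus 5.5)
Your proposal is correct and follows the paper's proof essentially step for step: you rewrite $\rho_+$ as $\inf_{Z\succ0}\sup_{C\in\DN}f(C,Z)$, apply Sion's theorem with $\DN$ as the compact variable, evaluate the inner infimum with the weighted trace AM--GM lemma, and get attainment from continuity of $\phi$ on the compact set $\DN$. Your added justifications, operator convexity of inversion for the convexity in $Z$ and the upper-semicontinuity alternative for attainment, are valid refinements of points the paper states without detail.
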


\begin{proof}
	For $Z\succ0$,
	\[
		\lambda_{\max}(B(Z))=\sup_{C\in\DN}\Tr(CB(Z))=\sup_{C\in\DN}f(C,Z).
	\]
	The set $\DN$ is compact and convex, and the positive-definite cone is convex.
	Moreover, $f$ is affine in $C$ and convex in $Z$.  Lemma~\ref{lem:sion},
	applied with the compact variable $C$, gives
	\[
		\rho_+=
		\inf_{Z\succ0}\sup_{C\in\DN}f(C,Z)
		=
		\sup_{C\in\DN}\inf_{Z\succ0}f(C,Z).
	\]
	For fixed $C$, Lemma~\ref{lem:weighted-amgm} gives
	\[
		\inf_{Z\succ0}f(C,Z)
		=\Tr(A_0C)+2\Tr\bigl[(C^{1/2}\eta(C)C^{1/2})^{1/2}\bigr]
		=\phi(C).
	\]
	Therefore
	\[
		\rho_+=\sup_{C\in\DN}\phi(C).
	\]
	The supremum is attained because $\DN$ is compact and $\phi$ is continuous.
\end{proof}

\begin{example}[A two-dimensional coordinate pencil]
	\label{ex:dual-coordinate}
	Let \(N=n=2\), \(A_0=0\), and
	\[
		A_1=\begin{pmatrix}1&0\\0&0\end{pmatrix},
		\qquad
		A_2=\begin{pmatrix}0&0\\0&1\end{pmatrix}.
	\]
	For
	\[
		C=\begin{pmatrix}p&z\\\overline z&1-p\end{pmatrix}\in\mathcal D_2,
	\]
	we have \(\eta(C)=\operatorname{diag}(p,1-p)\).  For a positive
	\(2\times2\) matrix \(E\),
	\((\Tr E^{1/2})^2=\Tr E+2\sqrt{\det E}\); hence
	\[
		\begin{aligned}
			\bigl(\Tr E_C^{1/2}\bigr)^2
			 & =p^2+(1-p)^2
			+2\sqrt{p(1-p)\bigl(p(1-p)-|z|^2\bigr)} \\
			 & \le 1.
		\end{aligned}
	\]
	Equality holds whenever \(z=0\), in particular at \(C=I_2/2\).
	Proposition~\ref{prop:dual} therefore gives
	\(\rho_+=\max_C\phi(C)=2\), in agreement with the fact that the free
	pencil is \(\operatorname{diag}(s_1,s_2)\).
\end{example}

\begin{proposition}[Dual form of Lehner's left edge]
	\label{prop:leftdual}
	For Hermitian $A_0,A_1,\ldots,A_n$,
	\[
		\rho_-=
		\min_{C\in\DN}
		\left\{
		\Tr(A_0C)-2\Tr(E_C^{1/2})
		\right\}.
	\]
\end{proposition}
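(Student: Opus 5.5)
The plan is to reduce to Proposition~\ref{prop:dual} by the symmetry $x\mapsto -x$, that is, by negating the whole family. Consider the family $(-A_0,A_1,\ldots,A_n)$. The coefficients $A_i$ enter $B(Z)$ only through $A_iZ^{-1}A_i$, and the covariance map $\eta$ depends only on the $A_i$. So the only change under this negation is in the constant term. The relevant substitution is $Z\mapsto -Z$ with $Z\prec0$: writing $Z=-W$ with $W\succ0$, we have
\[
	A_0+Z+\sum_i A_iZ^{-1}A_i=-\Bigl(-A_0+W+\sum_i A_iW^{-1}A_i\Bigr).
\]
Hence
\[
	\lambda_{\min}\bigl(A_0+Z+\textstyle\sum_iA_iZ^{-1}A_i\bigr)=-\lambda_{\max}\bigl(-A_0+W+\textstyle\sum_iA_iW^{-1}A_i\bigr).
\]
Taking the supremum over $Z\prec0$, equivalently over $W\succ0$, gives $\rho_-(A_0,\ldots)=-\rho_+(-A_0,A_1,\ldots,A_n)$, where $\rho_+(\cdot)$ denotes the right edge \eqref{eq:rho-plus} computed for the indicated family.

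Next I apply Proposition~\ref{prop:dual}, which holds for arbitrary Hermitian coefficients, to the family $(-A_0,A_1,\ldots,A_n)$. The quantity $E_C$ in \eqref{eq:EC} is built only from $\eta$, so it is unchanged. This gives
\[
	\rho_+(-A_0,A_1,\ldots,A_n)=\max_{C\in\DN}\bigl\{-\Tr(A_0C)+2\Tr(E_C^{1/2})\bigr\}.
\]
Negating both sides turns the maximum into a minimum:
\[
	\rho_-=\min_{C\in\DN}\bigl\{\Tr(A_0C)-2\Tr(E_C^{1/2})\bigr\}.
\]
The minimum is attained because $\DN$ is compact and $C\mapsto E_C^{1/2}$ is continuous; equivalently, attainment is inherited from the maximum in Proposition~\ref{prop:dual}.

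There is essentially no obstacle. The only points to check are that the sign change $Z=-W$ maps $\{Z\prec0\}$ bijectively onto $\{W\succ0\}$, and that $A_iZ^{-1}A_i=-A_iW^{-1}A_i$. Both are immediate.
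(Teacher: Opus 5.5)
Your proposal is correct and follows the paper's proof essentially verbatim: the substitution $Z=-W$ together with $\lambda_{\min}(-B)=-\lambda_{\max}(B)$ gives $\rho_-(A_0,A_1,\ldots,A_n)=-\rho_+(-A_0,A_1,\ldots,A_n)$. Proposition~\ref{prop:dual} applied with deterministic coefficient $-A_0$ then yields the stated minimum.
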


\begin{proof}
	Directly from the definitions \eqref{eq:rho-plus}--\eqref{eq:rho-minus},
	the substitution \(Z=-Y\), together with
	\(\lambda_{\min}(-B)=-\lambda_{\max}(B)\), gives
	\[
		\rho_-(A_0,A_1,\ldots,A_n)=-\rho_+(-A_0,A_1,\ldots,A_n).
	\]
	Applying Proposition~\ref{prop:dual} with deterministic coefficient $-A_0$
	gives
	\[
		\rho_-=
		-\max_{C\in\DN}
		\left\{
		-\Tr(A_0C)+2\Tr(E_C^{1/2})
		\right\},
	\]
	which is the stated minimum.
\end{proof}

\begin{remark}
	The use of Sion's minimax theorem in Proposition~\ref{prop:dual} is not needed
	for the main theorem.  The estimates below only require the upper bound
	\[
		\max_{C\in\DN}\phi(C)
		=
		\sup_{C\in\DN}\inf_{Z\succ0}f(C,Z)
		\le
		\inf_{Z\succ0}\sup_{C\in\DN}f(C,Z)
		=
		\rho_+,
	\]
	which is just the elementary direction of the minimax inequality.  Sion's
	theorem is used only to identify this upper bound with the full dual formula.
\end{remark}

\section{Full-process comparison}
\label{sec:full-process}

The goal of this section is to prove Proposition~\ref{prop:fullprocess}.  Namely,
we introduce the two processes that will be compared by
Theorem~\ref{thm:sudakov-fernique-drift} and verify the required increment
domination.

For $C\in\DN$ put
\[
	B_i(C)=C^{1/2}A_iC^{1/2},
	\qquad
	E_C=\sum_{i=1}^n B_i(C)^2.
\]
This agrees with the notation in \eqref{eq:EC}.

\begin{lemma}[Two-fiber Stiefel increment]
	\label{lem:twofiber}
	Let $P,Q\succeq0$ be $N\times N$ matrices and let $W,V\in\St_{N,M}$.
	Then
	\[
		\|W^*PW-V^*QV\|_F^2\le 2\|PW-QV\|_F^2.
	\]
\end{lemma}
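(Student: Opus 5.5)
The plan is to expand both Frobenius norms, reduce everything to traces of $N\times N$ matrices, and then close the gap with Cauchy--Schwarz and AM--GM. Write $X=PW$ and $Y=QV$, both in $M_{N,M}(\C)$, so that $W^*PW=W^*X$ and $V^*QV=V^*Y$. Using $WW^*=VV^*=I_N$ gives $\|W^*X\|_F^2=\Tr(X^*WW^*X)=\Tr P^2$, and likewise $\|V^*Y\|_F^2=\Tr Q^2$ and $\|X\|_F^2=\Tr P^2$, $\|Y\|_F^2=\Tr Q^2$. Put $U=VW^*\in M_N(\C)$ and use cyclicity of the trace. The two sides then become
\[
	\|W^*PW-V^*QV\|_F^2=\Tr P^2+\Tr Q^2-2\Rea\Tr(PU^*QU),
\]
\[
	2\|PW-QV\|_F^2=2\Tr P^2+2\Tr Q^2-4\Rea\Tr(PQU).
\]
So, with $s=\Tr P^2+\Tr Q^2$, the claim is equivalent to
\[
	s+2\Rea\Tr(PU^*QU)\ \ge\ 4\Rea\Tr(PQU).
\]

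The key observation uses positivity of $P$ and $Q$. The quantity
\[
	t:=\Tr(PU^*QU)=\Tr\bigl[(Q^{1/2}UP^{1/2})^*(Q^{1/2}UP^{1/2})\bigr]=\|Q^{1/2}UP^{1/2}\|_F^2
\]
is real and nonnegative. Next, I factor $\Tr(PQU)=\Tr\bigl[(P^{1/2}Q^{1/2})(Q^{1/2}UP^{1/2})\bigr]$ and apply Cauchy--Schwarz for the Frobenius inner product:
\[
	\Rea\Tr(PQU)\le\|Q^{1/2}P^{1/2}\|_F\,\|Q^{1/2}UP^{1/2}\|_F=\sqrt{\Tr(PQ)}\,\sqrt t .
\]
Since $\Tr(PQ)\le\|P\|_F\|Q\|_F\le s/2$, this yields $4\Rea\Tr(PQU)\le 2\sqrt2\,\sqrt{st}$. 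Finally, AM--GM gives $s+2t\ge 2\sqrt{2st}$, which closes the inequality.

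The only step needing care is recognizing that $\Tr(PU^*QU)$ is a nonnegative square norm, rather than a term to be bounded crudely. This is exactly where $P,Q\succeq0$ enters, and it is consistent with the paper's later remark that the increment comparison fails for signed coefficients. Once that term is kept with the favorable sign, the rest is the Cauchy--Schwarz and AM--GM bookkeeping above. Note also that no contraction property of $U$ is needed beyond the identities $WW^*=VV^*=I_N$ already used in the expansion.
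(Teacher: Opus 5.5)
Your proof is correct, and it takes a different route from the paper's. The paper obtains the lemma in one line from the Araki--Yamagami inequality $\||X|-|Y|\|_F\le\sqrt2\,\|X-Y\|_F$. It applies this with $X=PW$, $Y=QV$, after embedding the rectangular matrices as off-diagonal blocks of square matrices. It also uses that $WW^*=I_N$ and $P\succeq0$ give $|PW|=(W^*P^2W)^{1/2}=W^*PW$, and likewise $|QV|=V^*QV$.

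You instead expand both sides, reduce the claim to the trace inequality $4\Rea\Tr(PQU)\le s+2t$, and close it with Cauchy--Schwarz and AM--GM. Each step checks out:
\begin{itemize}
\item the expansions via $WW^*=VV^*=I_N$ and $U=VW^*$;
\item the identity $t=\|Q^{1/2}UP^{1/2}\|_F^2\ge0$;
\item the factorization $\Tr(PQU)=\Tr[(P^{1/2}Q^{1/2})(Q^{1/2}UP^{1/2})]$;
\item the bound $\Tr(PQ)\le s/2$;
\item the final $s+2t\ge2\sqrt{2st}$.
\end{itemize}

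In fact your argument is a direct proof of the instance of Araki--Yamagami the paper needs. With $X=PW$ and $Y=QV$ one has $t=\Tr(|X||Y|)$ and $\Tr(PQU)=\Tr(X^*Y)$. Essentially the same three steps, with polar decompositions in place of the coisometries, give that inequality in general.

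The paper's citation buys brevity. It also frames the lemma as Hilbert--Schmidt Lipschitz continuity of $X\mapsto|X|$, with positivity entering only through the identification of $|PW|$. Your version is self-contained and avoids the rectangular embedding. It also makes explicit that positivity is what lets you take square roots of $P$ and $Q$, so that $t$ is a nonnegative square and $\Tr(PQ)\ge0$. This matches the failure for signed coefficients in the paper's final section.
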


\begin{proof}
	This is a direct application of Araki--Yamagami's Hilbert--Schmidt
	inequality~\cite[Theorem~1]{ArakiYamagami1981},
	\[
		\||X|-|Y|\|_F\le \sqrt2\,\|X-Y\|_F.
	\]
	The cited square-matrix statement applies to rectangular matrices by
	embedding \(X\in M_{N,M}(\C)\) as the upper-right block of an
	\((N+M)\times(N+M)\) matrix with all other blocks zero.  Under this
	embedding, both \(\|X-Y\|_F\) and \(\||X|-|Y|\|_F\) are unchanged.
	Now, since $WW^*=VV^*=I_N$ and $P,Q\succeq0$,
	\[
		|PW|=(W^*P^2W)^{1/2}=W^*PW,\qquad
		|QV|=(V^*Q^2V)^{1/2}=V^*QV.
	\]
	Applying the cited inequality with $X=PW$ and $Y=QV$ gives the claim.
\end{proof}

\begin{lemma}[Density--Stiefel representation]
	\label{lem:density-stiefel}
	Assume \(M\ge N\).  Then, in distribution,
	\begin{equation}
		\label{eq:lambdamax-density-stiefel}
		\lambda_{\max}(H_M)
		=
		\sup_{C\in\DN,\,W\in\St_{N,M}}
		\left\{
		\Tr(A_0C)+\frac1{\sqrt M}\sum_{i=1}^n
		\ip{G_i}{W^*B_i(C)W}_F
		\right\}.
	\end{equation}
\end{lemma}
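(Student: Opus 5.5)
The plan is to prove the identity pathwise, with each $G_i$ replaced by $G_i^{\top}$, and then pass to the law of $(G_1,\ldots,G_n)$. I start from the Rayleigh quotient
\[
\lambda_{\max}(H_M)=\sup_{\|v\|=1}\ip{v}{H_Mv},
\]
with $v=\vecop(S)$, $S\in M_{N,M}(\C)$ and $\|S\|_F=1$. The identity $(A\otimes G)\vecop(S)=\vecop(ASG^{\top})$ recorded in Section~\ref{sec:setup} gives
\[
\ip{v}{H_Mv}=\Tr(S^*A_0S)+\frac1{\sqrt M}\sum_{i=1}^n\Tr(S^*A_iSG_i^{\top}).
\]
Each summand is real, because $S^*A_iS$ and $G_i^{\top}$ are Hermitian. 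Since for Hermitian $X,Y$ we have $\Tr(XY)=\ip{X}{Y}_F$, the $i$-th summand equals $\ip{G_i^{\top}}{S^*A_iS}_F$.

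Next I reparametrize the unit sphere of $M_{N,M}(\C)$ by $\DN\times\St_{N,M}$ through the map $(C,W)\mapsto S=C^{1/2}W$.

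\emph{Surjectivity.} Given $S$ with $\|S\|_F=1$, put $C=SS^*$, so that $C\in\DN$. The polar decomposition gives $S=C^{1/2}U$ with $U$ a partial isometry. Because $M\ge N$, $U$ extends to a coisometry $W\in\St_{N,M}$: choose $W$ to agree with $U$ on $(\ker C)^{\perp}$ and complete it isometrically on $\ker C$, which is possible since $N\le M$. This extension still satisfies $S=C^{1/2}W$, as $C^{1/2}$ kills the added part.

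\emph{Well-definedness.} Conversely, for any $C\in\DN$ and $W\in\St_{N,M}$, the matrix $S=C^{1/2}W$ satisfies
\[
\|S\|_F^2=\Tr(C^{1/2}WW^*C^{1/2})=\Tr C=1.
\]

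For such $S$ one computes $S^*A_iS=W^*C^{1/2}A_iC^{1/2}W=W^*B_i(C)W$. Moreover
\[
\Tr(S^*A_0S)=\Tr(A_0C^{1/2}WW^*C^{1/2})=\Tr(A_0C),
\]
using $WW^*=I_N$. Substituting these, $\lambda_{\max}(H_M)$ equals the right-hand side of \eqref{eq:lambdamax-density-stiefel} with $G_i$ replaced by $G_i^{\top}$, for every realization.

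Finally, $G_i^{\top}=\overline{G_i}$ is again a GUE matrix with the same normalization. Its diagonal entries are unchanged, and its off-diagonal entries are complex conjugated; the law of a standard complex Gaussian is invariant under conjugation. Independence across $i$ is preserved. Hence $(G_1^{\top},\ldots,G_n^{\top})$ has the same joint law as $(G_1,\ldots,G_n)$. The whole process indexed by $(C,W)$, and therefore its supremum, has the same law, which gives the claimed equality in distribution.

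The only delicate points are the coisometric extension of the polar factor when $C$ is singular, which is where $M\ge N$ is used, and keeping track of the transpose. I would note explicitly that the equality holds at the level of the full process, not only of its supremum, since the Sudakov--Fernique comparison (Theorem~\ref{thm:sudakov-fernique-drift}) will be applied to this process.
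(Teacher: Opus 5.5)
Your proposal is correct and follows the paper's proof essentially step for step: the vectorized Rayleigh quotient, the reparametrization $S=C^{1/2}W$ via polar decomposition (surjective because $M\ge N$), the identities $\Tr(S^*A_0S)=\Tr(A_0C)$ and $S^*A_iS=W^*B_i(C)W$, and finally $(G_1^{\top},\ldots,G_n^{\top})\overset{d}{=}(G_1,\ldots,G_n)$. One small wording fix: since $W$ maps $\C^M\to\C^N$, it is $W^*$ that should agree with $U^*$ on $(\ker C)^\perp$ and send $\ker C$ isometrically into $\ker S$ (possible since $M-\operatorname{rank}S\ge N-\operatorname{rank}S$), and this is exactly why $C^{1/2}$ kills the added part.
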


\begin{proof}
	The Rayleigh-quotient formula and the vectorization convention of
	Section~\ref{sec:setup} give
	\[
		\lambda_{\max}(H_M)
		=
		\sup_{\|S\|_F=1}
		\left\{
		\Tr(S^*A_0S)+\frac1{\sqrt M}\sum_i
		\Tr(S^*A_iS G_i^{\top})
		\right\}.
	\]
	For such \(S\), put \(C=SS^*\in\DN\).  The polar decomposition, with
	the partial coisometry extended arbitrarily on \(\ker C\), gives
	\(S=C^{1/2}W\) for some \(W\in\St_{N,M}\).  Conversely, every pair
	\((C,W)\in\DN\times\St_{N,M}\) yields \(S=C^{1/2}W\) with
	\(\|S\|_F^2=\Tr C=1\), so this parametrization is surjective.  Finally,
	\[
		\Tr(S^*A_0S)=\Tr(A_0C),\qquad
		S^*A_iS=W^*B_i(C)W.
	\]
	The assertion follows because
	\((G_1,\ldots,G_n)\overset{d}{=}
	(G_1^{\top},\ldots,G_n^{\top})\).
\end{proof}

Let $\Gamma_1,\ldots,\Gamma_n\in M_{N,M}(\C)$ be independent standard complex
Gaussian matrices.  We compare the centered Gaussian processes indexed by
$(C,W)\in\DN\times\St_{N,M}$ defined by
\begin{equation}
	\label{eq:original-process}
	X_{C,W}=\frac1{\sqrt M}\sum_{i=1}^n
	\ip{G_i}{W^*B_i(C)W}_F
\end{equation}
and
\begin{equation}
	\label{eq:comparison-process}
	Y_{C,W}=\frac2{\sqrt M}\sum_{i=1}^n\ip{\Gamma_i}{B_i(C)W}_F.
\end{equation}

\begin{proposition}[Full-process Sudakov comparison]
	\label{prop:fullprocess}
	Assume $M\ge N$ and $A_i\succeq0$ for $i=1,\ldots,n$.
	Then
	\[
		\E\lambda_{\max}(H_M)
		\le
		\E\sup_{C\in\DN}
		\left\{
		\Tr(A_0C)+\frac2{\sqrt M}\left\|\sum_{i=1}^nB_i(C)\Gamma_i\right\|_1
		\right\}.
	\]
\end{proposition}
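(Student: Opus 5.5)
The plan is to apply Theorem~\ref{thm:sudakov-fernique-drift} on the compact index set $T=\DN\times\St_{N,M}$. The drift is $a(C,W)=\Tr(A_0C)$, and the two processes are $X_{C,W}$ from \eqref{eq:original-process} and $Y_{C,W}$ from \eqref{eq:comparison-process}.

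By Lemma~\ref{lem:density-stiefel}, $\E\lambda_{\max}(H_M)=\E\sup_{T}\{a+X\}$. Both processes are centered and Gaussian, and their sample paths are continuous because $C\mapsto C^{1/2}$ is continuous on $\DN$ and everything else is polynomial in $(C,W)$. Once the increment domination is established, the comparison theorem gives $\E\sup\{a+X\}\le\E\sup\{a+Y\}$. It then remains to evaluate the supremum over $W$ inside $Y$ for each fixed $C$. We have
\[
\sum_i\ip{\Gamma_i}{B_i(C)W}_F=\Rea\Tr\Bigl(\bigl(\textstyle\sum_iB_i(C)\Gamma_i\bigr)^* W\Bigr),
\]
using $B_i(C)$ Hermitian. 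Lemma~\ref{lem:trace-norm-variational} then turns $\sup_W$ of this into $\|\sum_iB_i(C)\Gamma_i\|_1$. Taking the supremum over $C$ gives exactly the stated right-hand side.

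The main step is the increment comparison. By independence across $i$ and the GUE normalization (the matrices $W^*B_iW$ are Hermitian), we get
\[
\E(X_{C,W}-X_{C',W'})^2=\frac1M\sum_i\|W^*B_i(C)W-W'^*B_i(C')W'\|_F^2 .
\]
For the comparison process, the standard complex normalization $\E\ip{\Gamma}{L}_F^2=\tfrac12\|L\|_F^2$ gives
\[
\E(Y_{C,W}-Y_{C',W'})^2=\frac4M\cdot\frac12\sum_i\|B_i(C)W-B_i(C')W'\|_F^2 .
\]
The positivity hypothesis enters here: $A_i\succeq0$ implies $B_i(C)=C^{1/2}A_iC^{1/2}\succeq0$. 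Lemma~\ref{lem:twofiber}, applied with $P=B_i(C)$, $Q=B_i(C')$, $W$ and $V=W'$, bounds each term of the $X$-increment by twice the corresponding term of the $Y$-increment. Summing over $i$ gives exactly the required inequality, with the factor $2$ in the definition of $Y$ matching the $\sqrt2$ in Araki--Yamagami.

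Thus the only delicate point, the increment domination, is already handled by Lemma~\ref{lem:twofiber}. What remains is careful bookkeeping of the Gaussian normalizations (real versus complex, the factor $1/2$) and the continuity and compactness checks needed to invoke Theorem~\ref{thm:sudakov-fernique-drift}. The final supremum is attained, though finiteness of the expectation is all that is needed.
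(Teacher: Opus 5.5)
Your proposal is correct and follows the paper's proof essentially step for step. It uses the same index set $\DN\times\St_{N,M}$ with drift $\Tr(A_0C)$, increment domination via Lemma~\ref{lem:twofiber} (Araki--Yamagami) using $B_i(C)\succeq0$, Vitale's drift version of Sudakov--Fernique, and Lemma~\ref{lem:trace-norm-variational} to evaluate the supremum over $W$.

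One small wording correction: Lemma~\ref{lem:twofiber} bounds each $X$-term $\frac1M\|W^*B_i(C)W-W'^*B_i(C')W'\|_F^2$ by $\frac2M\|B_i(C)W-B_i(C')W'\|_F^2$. That bound is exactly the corresponding $Y$-term, not twice it, as your own normalization computation already shows.
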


\begin{proof}
	The index set \(\DN\times\St_{N,M}\) is compact: both factors are closed
	and bounded subsets of finite-dimensional matrix spaces.  The map
	\(C\mapsto C^{1/2}\) is continuous, so, for every realization of the
	Gaussian matrices, the sample paths of \(X_{C,W}\) and \(Y_{C,W}\) are
	continuous in \((C,W)\).  The deterministic drift
	\((C,W)\mapsto\Tr(A_0C)\) is continuous as well.  Thus all the topological
	hypotheses of Theorem~\ref{thm:sudakov-fernique-drift} are satisfied.

	Since $A_i\succeq0$, also $B_i(C)\succeq0$ for every $C\in\DN$.
	For two indices $(C,W)$ and $(C',V)$,
	\[
		\E|X_{C,W}-X_{C',V}|^2
		=\frac1M\sum_{i=1}^n
		\|W^*B_i(C)W-V^*B_i(C')V\|_F^2.
	\]
	By Lemma~\ref{lem:twofiber}, this is at most
	\[
		\frac2M\sum_{i=1}^n\|B_i(C)W-B_i(C')V\|_F^2.
	\]
	On the other hand, the normalization of the complex Gaussian entries gives
	\[
		\E|Y_{C,W}-Y_{C',V}|^2
		=\frac2M\sum_{i=1}^n\|B_i(C)W-B_i(C')V\|_F^2.
	\]
	Applied with the deterministic drift $(C,W)\mapsto\Tr(A_0C)$, the same theorem
	gives the inequality used here:
	\[
		\E\sup_{C,W}\{\Tr(A_0C)+X_{C,W}\}
		\le
		\E\sup_{C,W}\{\Tr(A_0C)+Y_{C,W}\}.
	\]
	For fixed $C$ and fixed $\Gamma_i$,
	\[
		\sup_{W\in\St_{N,M}}\Rea\sum_i\Tr(\Gamma_i^*B_i(C)W)
		=\left\|\sum_iB_i(C)\Gamma_i\right\|_1,
	\]
	by Lemma~\ref{lem:trace-norm-variational}.  This proves the claim.
\end{proof}

\section{The block-Wishart estimate and the upper edge}
\label{sec:block-wishart}

The full-process comparison still contains a supremum over $C$ inside the
expectation.  The following proposition controls it uniformly by using a single
block Gaussian matrix.  We state it first with the quantity $\Psi$ defined below, rather than
with $\rho_+$, because this form will also be used for the lower edge with
deterministic coefficient $-A_0$.

\begin{lemma}[Positive-part estimate for the largest Wishart eigenvalue]
	\label{lem:wishart-estimate}
	Let $\Gamma$ be a $d\times M$ random matrix with independent standard real or
	standard complex Gaussian entries.  Then
	\[
		\E\left[
			\left(
			\left\|\frac{\Gamma\Gamma^*}{M}\right\|_{\op}^{1/2}
			-1
			\right)_+
			\right]
		\le
		3\sqrt{\frac{d}{M}} .
	\]
\end{lemma}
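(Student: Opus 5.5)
We have $\|\Gamma\Gamma^*/M\|_{\op}^{1/2}=s_{\max}(\Gamma)/\sqrt M$, so the claim concerns the largest singular value of $\Gamma$. The plan is to bound its mean by the Gordon/Sudakov--Fernique estimate and control the positive part through the mean alone.

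\textbf{Step 1 (mean of the top singular value).} For the real case, Gordon's inequality (Davidson--Szarek~\cite[Section~II.c]{DavidsonSzarek2001}) gives
\[
\E s_{\max}(\Gamma)\le\sqrt d+\sqrt M .
\]
This is itself an instance of Theorem~\ref{thm:sudakov-fernique-drift}. Take the process $\langle \Gamma v,u\rangle$ indexed by unit vectors $u\in\mathbb R^d$, $v\in\mathbb R^M$, and compare it with $\langle g,u\rangle+\langle h,v\rangle$, where $g,h$ are independent standard Gaussian vectors. The increment inequality is the elementary computation
\[
\|uv^\top-u'v'^\top\|_F^2\le\|u-u'\|^2+\|v-v'\|^2
\]
for unit vectors. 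It then remains to use $\E\|g\|\le\sqrt d$ and $\E\|h\|\le\sqrt M$.

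For the complex case, write $\Gamma=X+iY$, where $X,Y$ have independent $N(0,1/2)$ entries. Realify $\Gamma$ as the $2d\times 2M$ real matrix
\[
\frac{1}{\sqrt2}\begin{pmatrix}X'&-Y'\\Y'&X'\end{pmatrix},
\]
where $X'=\sqrt2X$ and $Y'=\sqrt2Y$ are standard. The singular values are preserved, but the entries are not independent, so the real result does not apply directly. Instead I would rerun the Sudakov--Fernique comparison with complex unit vectors $u,v$, using the process $\Rea\langle\Gamma v,u\rangle$. Its increments are $\tfrac12\|uv^*-u'v'^*\|_F^2\le\tfrac12\bigl(\|u-u'\|^2+\|v-v'\|^2\bigr)$. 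These are dominated by those of $\Rea\langle g,u\rangle+\Rea\langle h,v\rangle$ with standard complex Gaussian vectors $g,h$. This again yields $\E s_{\max}\le\E\|g\|+\E\|h\|\le\sqrt d+\sqrt M$, since $\E\|g\|^2=d$.

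\textbf{Step 2 (from the mean to the positive part).} Write $s=s_{\max}(\Gamma)/\sqrt M$. Use $(s-1)_+\le |s-\E s|+(\E s-1)_+$, where Step 1 gives $(\E s-1)_+\le\sqrt{d/M}$. For the fluctuation term, $s_{\max}$ is a $1$-Lipschitz function of the entries in Frobenius norm. In the complex case it is $1/\sqrt2$-Lipschitz with respect to the standard real coordinates, which only helps. By the Gaussian Poincaré inequality, $\operatorname{Var}(s_{\max})\le 1$, so
\[
\E|s-\E s|\le M^{-1/2}.
\]
Finally $M^{-1/2}\le\sqrt{d/M}$ because $d\ge1$. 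Summing gives $\E(s-1)_+\le 2\sqrt{d/M}\le 3\sqrt{d/M}$.

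\textbf{Main obstacle.} The only delicate point is the complex case of Step 1: the realification has dependent entries, so the real Gordon bound cannot be quoted directly. Redoing the comparison with complex index vectors, as above, handles this cleanly. The remaining steps are routine, and the constant $3$ leaves slack.
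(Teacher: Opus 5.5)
Your Step 2 and the real case of Step 1 are fine. The real case is the Gordon/Davidson--Szarek mean bound obtained from Theorem~\ref{thm:sudakov-fernique-drift}. The Poincar\'e split $(s-1)_+\le|s-\E s|+(\E s-1)_+$ is a valid and slightly more economical substitute for the paper's tail integration.

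The gap is the complex case of Step 1. This is exactly the case the paper needs, since Lemma~\ref{lem:wishart-estimate} is applied to the complex block matrix $\Gamma$ in Proposition~\ref{prop:blockwishart}. The increment inequality you assert,
\[
\|uv^*-u'v'^*\|_F^2\le\|u-u'\|^2+\|v-v'\|^2 ,
\]
is false for complex unit vectors. Write $a=u^*u'$ and $b=v'^*v$. Then the right side minus the left side equals $2\Rea\bigl[(1-a)(1-b)\bigr]$. This is nonnegative when $a,b$ are real, but can be negative otherwise. Concretely, take $u'=e^{i\theta}u$ and $v'=e^{-i\theta}v$. Then $u'v'^*=e^{2i\theta}uv^*$, so the left side is $4\sin^2\theta$ and the right side is $8\sin^2(\theta/2)$. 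For $0<|\theta|<\pi/2$ the left side is larger, and this already happens for $d=M=1$. Along this opposite-phase direction the rank-one matrix $uv^*$ rotates twice as fast as the comparison process $\Rea\langle g,u\rangle+\Rea\langle h,v\rangle$ registers. So the hypothesis of Theorem~\ref{thm:sudakov-fernique-drift} fails for $\Rea\langle\Gamma v,u\rangle$ on the product of complex spheres, and $\E s_{\max}(\Gamma)\le\sqrt d+\sqrt M$ does not follow from your argument.

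This is not a harmless loss of constants. The inequality that does hold, $\|uv^*-u'v'^*\|_F^2\le2(\|u-u'\|^2+\|v-v'\|^2)$, only gives $\E s_{\max}\le\sqrt2(\sqrt d+\sqrt M)$. The crude bound $\|X+iY\|_{\op}\le\|X\|_{\op}+\|Y\|_{\op}$ gives the same thing. Either way $(\E s-1)_+$ is of order $\sqrt2-1$ rather than $O(\sqrt{d/M})$. The lemma needs the coefficient of $\sqrt M$ in the mean bound to be exactly $1$.

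The paper does not derive this by a naive complex Sudakov--Fernique argument. It imports the complex mean bound $\E\|\Gamma\|_{\op}\le\sqrt M+\sqrt d$ from the complex Hilbert-space form of Chevet's inequality, then applies Gaussian concentration. To close your proof you need such a genuinely complex input, or a comparison argument that actually controls the phase direction. Once the mean bound is in place, your Step 2 goes through unchanged.
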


\begin{proof}
	We first record the tail bound
	\[
		\mathbb P\left\{
		\|\Gamma\|_{\op}\ge \sqrt M+\sqrt d+t
		\right\}
		\le e^{-t^2/2}.
	\]
	For real Gaussian entries this is the non-asymptotic singular-value estimate
	of Davidson and Szarek~\cite[Theorem~II.13]{DavidsonSzarek2001}, after
	rescaling their normalization.  If needed, transpose the matrix to match the
	aspect-ratio convention in their statement; the estimate and the quantity
	\(\sqrt M+\sqrt d\) are symmetric under transposition.  For standard complex
	Gaussian entries in the
	convention \(\E|\gamma|^2=1\), the
	expectation bound
	\[
		\E\|\Gamma\|_{\op}\le \sqrt M+\sqrt d
	\]
	follows from the complex Hilbert-space form of Chevet's
	inequality~\cite[Section~43]{TomczakJaegermann1989}.  Since the map
	$\Gamma\mapsto\|\Gamma\|_{\op}$ is 1-Lipschitz for the real Frobenius norm,
	Gaussian concentration on the underlying real coordinates gives the displayed
	tail bound; with our complex normalization the concentration exponent is in
	fact $e^{-t^2}$, so the weaker bound above follows.
	Since
	\[
		\left\|\frac{\Gamma\Gamma^*}{M}\right\|_{\op}^{1/2}
		=
		\frac{\|\Gamma\|_{\op}}{\sqrt M},
	\]
	the tail-integral formula gives
	\[
		\begin{aligned}
			\E\left[
				\left(
				\left\|\frac{\Gamma\Gamma^*}{M}\right\|_{\op}^{1/2}
				-1
				\right)_+
				\right]
			 & \le
			\sqrt{\frac dM}
			+
			\frac1{\sqrt M}\int_0^\infty e^{-u^2/2}\,du \\
			 & =
			\sqrt{\frac dM}
			+
			\sqrt{\frac{\pi}{2M}}
			\le
			3\sqrt{\frac dM},
		\end{aligned}
	\]
	where the last inequality uses $d\ge1$.
\end{proof}

\begin{proposition}[Uniform control of the comparison process]
	\label{prop:blockwishart}
	Assume $M\ge N$ and $A_i\succeq0$ for $i=1,\ldots,n$.
	Define
	\[
		\begin{aligned}
			\Psi
			         & :=
			\max_{C\in\DN}
			\left\{
			\Tr(A_0C)+2\Tr(E_C^{1/2})
			\right\},                                  \\
			B_{\max} & :=\max_{C\in\DN}\Tr(E_C^{1/2}).
		\end{aligned}
	\]
	Then
	\[
		\E\lambda_{\max}\left(
		A_0\otimes I_M+\frac1{\sqrt M}\sum_{i=1}^n A_i\otimes G_i
		\right)
		\le
		\Psi+6 B_{\max}\sqrt{\frac{nN}{M}}
	\]
	and
	\[
		B_{\max}\le \Arow.
	\]
\end{proposition}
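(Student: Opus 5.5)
The plan is to start from Proposition~\ref{prop:fullprocess} and remove the supremum over \(C\) from inside the expectation by stacking all the Gaussian blocks into one rectangular matrix. Set \(d=nN\). Let \(\Gamma\in M_{d,M}(\C)\) be the column stack of \(\Gamma_1,\ldots,\Gamma_n\); its entries are independent standard complex Gaussians. Let \(K(C)=[B_1(C),\ldots,B_n(C)]\in M_{N,d}(\C)\) be the corresponding row block. Then
\[
\sum_{i=1}^n B_i(C)\Gamma_i=K(C)\Gamma .
\]
The point of this rewriting is that \(\Gamma\) no longer depends on \(C\).

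\textbf{Step 1: a pointwise bound in \(C\).} By the Hölder inequality for Schatten norms, \(\|K(C)\Gamma\|_1\le\|K(C)\|_1\|\Gamma\|_{\op}\). Since \(K(C)K(C)^*=\sum_i B_i(C)^2=E_C\), we have \(\|K(C)\|_1=\Tr(E_C^{1/2})\). Writing \(\|\Gamma\|_{\op}/\sqrt M\le 1+(\|\Gamma\|_{\op}/\sqrt M-1)_+\), we get, for every \(C\in\DN\),
\[
\Tr(A_0C)+\frac2{\sqrt M}\|K(C)\Gamma\|_1
\le
\Tr(A_0C)+2\Tr(E_C^{1/2})+2\Tr(E_C^{1/2})\Bigl(\tfrac{\|\Gamma\|_{\op}}{\sqrt M}-1\Bigr)_+ .
\]
The right-hand side is at most \(\Psi+2B_{\max}\bigl(\|\Gamma\|_{\op}/\sqrt M-1\bigr)_+\), uniformly in \(C\).

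\textbf{Step 2: take expectations.} Taking the supremum over \(C\) and then expectations, Proposition~\ref{prop:fullprocess} and Lemma~\ref{lem:wishart-estimate} (applied with this \(d\)) give
\[
\E\lambda_{\max}(H_M)\le \Psi+2B_{\max}\cdot 3\sqrt{nN/M}=\Psi+6B_{\max}\sqrt{nN/M},
\]
which is the first claim.

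\textbf{Step 3: the bound on \(B_{\max}\).} Put \(D=\eta(C)\succeq0\). Then \(E_C=C^{1/2}DC^{1/2}=|D^{1/2}C^{1/2}|^2\), so \(\Tr(E_C^{1/2})=\|D^{1/2}C^{1/2}\|_1\). The Cauchy--Schwarz (Hölder \(2\)--\(2\)) inequality bounds this by
\[
\|D^{1/2}\|_F\,\|C^{1/2}\|_F=\sqrt{\Tr D\cdot\Tr C}=\sqrt{\Tr\eta(C)} .
\]
Moreover \(\Tr\eta(C)=\Tr\bigl(C\sum_iA_i^2\bigr)\le\|\sum_iA_i^2\|_{\op}\), because \(C\) is a density matrix. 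Taking the maximum over \(C\) gives \(B_{\max}\le\Arow\).

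The main obstacle is recognizing the stacking in Step~1: it turns the \(C\)-dependent sum into a fixed Gaussian matrix multiplied by a deterministic factor whose trace norm is exactly \(\Tr(E_C^{1/2})\). After that, only routine checks remain. One is that the entries of the stacked \(\Gamma\) are independent standard complex Gaussians, so that Lemma~\ref{lem:wishart-estimate} applies. The other is the identity \(\|K(C)\|_1=\Tr\bigl((K(C)K(C)^*)^{1/2}\bigr)\).
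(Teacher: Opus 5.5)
Your proof is correct and follows essentially the same route as the paper. You stack the $\Gamma_i$ into one $nN\times M$ Gaussian matrix, bound $\|K(C)\Gamma\|_1$ by $\|\Gamma\|_{\op}\Tr(E_C^{1/2})$, split off $(\|\Gamma\|_{\op}/\sqrt M-1)_+$ uniformly in $C$ before applying Lemma~\ref{lem:wishart-estimate}, and finish with a Schatten Cauchy--Schwarz bound for $B_{\max}$. The differences are cosmetic: you use H\"older's inequality where the paper uses L\"owner-order monotonicity of $X\mapsto\Tr X^{1/2}$, and you factor through $\eta(C)^{1/2}C^{1/2}$ rather than $C^{1/2}[A_1C^{1/2}\ \cdots\ A_nC^{1/2}]$, which gives the same bound $\sqrt{\Tr\eta(C)}$.
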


\begin{proof}
	By Proposition~\ref{prop:fullprocess},
	\[
		\E\lambda_{\max}(H_M)
		\le
		\E\sup_{C\in\DN}
		\left\{
		\Tr(A_0C)+\frac2{\sqrt M}\left\|\sum_{i=1}^nB_i(C)\Gamma_i\right\|_1
		\right\}.
	\]
	Introduce the block row matrix
	\[
		R_C=[B_1(C)\ B_2(C)\ \cdots\ B_n(C)]\in M_{N,nN}(\C)
	\]
	and the block Gaussian matrix
	\[
		\Gamma=\begin{bmatrix}\Gamma_1\\ \Gamma_2\\ \vdots\\ \Gamma_n\end{bmatrix}
		\in M_{nN,M}(\C),
	\]
	where $\Gamma$ is an $nN\times M$ standard complex Gaussian matrix.
	Then
	\[
		\sum_iB_i(C)\Gamma_i=R_C\Gamma,
		\qquad
		R_CR_C^*=\sum_iB_i(C)^2=E_C.
	\]
	For every $C$,
	\[
		\frac1{\sqrt M}\|R_C\Gamma\|_1
		=\Tr\left[\left(R_C\frac{\Gamma\Gamma^*}{M}R_C^*\right)^{1/2}\right].
	\]
	Since
	\[
		0\preceq R_C\frac{\Gamma\Gamma^*}{M}R_C^*
		\preceq
		\left\|\frac{\Gamma\Gamma^*}{M}\right\|_{\op}R_CR_C^*
		=
		\left\|\frac{\Gamma\Gamma^*}{M}\right\|_{\op}E_C,
	\]
	and $X\mapsto\Tr(X^{1/2})$ is monotone on the positive semidefinite cone, we get
	\[
		\frac1{\sqrt M}\|R_C\Gamma\|_1
		\le
		\left\|\frac{\Gamma\Gamma^*}{M}\right\|_{\op}^{1/2}\Tr(E_C^{1/2}).
	\]
	Therefore
	\[
		\E\lambda_{\max}(H_M)
		\le
		\E\sup_C
		\left\{
		\Tr(A_0C)+
		2\left\|\frac{\Gamma\Gamma^*}{M}\right\|_{\op}^{1/2}
		\Tr(E_C^{1/2})
		\right\}.
	\]
	For every $\alpha\ge0$, the pointwise bound
	\[
		\sup_C
		\left\{
		\Tr(A_0C)+2\alpha\Tr(E_C^{1/2})
		\right\}
		\le
		\Psi+2(\alpha-1)_+B_{\max}
	\]
	follows directly from the two cases $\alpha\le1$ and $\alpha>1$.
	Therefore
	\[
		\E\lambda_{\max}(H_M)
		\le
		\Psi+2B_{\max}\,
		\E\left[
			\left(
			\left\|\frac{\Gamma\Gamma^*}{M}\right\|_{\op}^{1/2}-1
			\right)_+
			\right].
	\]
	By Lemma~\ref{lem:wishart-estimate} with
	$d=nN$, this gives
	\[
		\E\lambda_{\max}(H_M)
		\le
		\Psi+6B_{\max}\sqrt{\frac{nN}{M}}.
	\]

	It remains to bound $B_{\max}$.  Since
	\[
		E_C^{1/2}=(R_CR_C^*)^{1/2},
	\]
	we have
	\[
		\Tr(E_C^{1/2})=\|R_C\|_1.
	\]
	Writing
	\[
		R_C=C^{1/2}[A_1C^{1/2}\ A_2C^{1/2}\ \cdots\ A_nC^{1/2}],
	\]
	the Schatten Cauchy--Schwarz inequality gives
	\[
		\|R_C\|_1
		\le
		\|C^{1/2}\|_F
		\left\|[A_1C^{1/2}\ A_2C^{1/2}\ \cdots\ A_nC^{1/2}]\right\|_F .
	\]
	As $\Tr C=1$, the first factor is one, while the square of the second is
	\[
		\Tr\left(\sum_{i=1}^n A_iCA_i\right)
		=
		\Tr\left(C\sum_{i=1}^n A_i^2\right)
		\le
		\left\|\sum_{i=1}^n A_i^2\right\|_{\op}.
	\]
	Taking the supremum over $C\in\DN$ proves
	\[
		B_{\max}\le \Arow.
	\]
\end{proof}

\upperedgeprop

\begin{proof}[Proof of Proposition~\ref{prop:upperedge}]
	By Proposition~\ref{prop:blockwishart},
	\[
		\E\lambda_{\max}(H_M)
		\le
		\Psi+6B_{\max}\sqrt{\frac{nN}{M}}.
	\]
	Proposition~\ref{prop:dual} gives $\Psi=\rho_+$.  The bound
	$B_{\max}\le\Arow$ from Proposition~\ref{prop:blockwishart} completes the
	proof.
\end{proof}

\begin{remark}[Multiplicative form when $A_0\succeq0$]
	Under the additional assumption $A_0\succeq0$, the additive error in
	Proposition~\ref{prop:upperedge} also admits the cleaner multiplicative form
	\[
		\E\lambda_{\max}(H_M)
		\le
		\left(1+3\sqrt{\frac{nN}{M}}\right)\rho_+.
	\]
	Indeed, in the proof of Proposition~\ref{prop:blockwishart}, if
	$\alpha=\|\Gamma\Gamma^*/M\|_{\op}^{1/2}$, then $A_0\succeq0$ gives the
	pointwise estimate
	\[
		\sup_C
		\left\{
		\Tr(A_0C)+2\alpha\Tr(E_C^{1/2})
		\right\}
		\le
		(1+(\alpha-1)_+)\rho_+.
	\]
	For \(\alpha>1\), this follows by writing
	\[
		\Tr(A_0C)+2\alpha\Tr(E_C^{1/2})
		=
		\alpha\{\Tr(A_0C)+2\Tr(E_C^{1/2})\}
		+(1-\alpha)\Tr(A_0C),
	\]
	and using \((1-\alpha)\Tr(A_0C)\le0\).  The case \(\alpha\le1\) is immediate.
	Moreover, \(A_0\succeq0\) implies \(\rho_+\ge0\), since
	\(\rho_+=\max_{C\in\DN}\phi(C)\) and \(\phi(C)\ge0\).
	Lemma~\ref{lem:wishart-estimate} then yields the displayed bound.  We keep the
	additive form in Proposition~\ref{prop:upperedge}, as it does not require
	$A_0\succeq0$.
\end{remark}

\section{The left edge}
\label{sec:left-edge}

We also record the corresponding lower-edge estimate.  The point is that the
Sudakov comparison itself does not require positivity of the deterministic
coefficient; the lower edge is obtained by applying the upper-edge estimate to
$-H_M$.

\leftedgeprop

\begin{proof}
	Since $G_i$ and $-G_i$ have the same
	distribution,
	\[
		-H_M
		=
		-A_0\otimes I_M-\frac1{\sqrt M}\sum_{i=1}^n A_i\otimes G_i
		\stackrel{d}{=}
		-A_0\otimes I_M+\frac1{\sqrt M}\sum_{i=1}^n A_i\otimes G_i .
	\]
	Therefore Proposition~\ref{prop:blockwishart}, applied with deterministic
	coefficient $-A_0$, gives
	\[
		\E\lambda_{\max}(-H_M)
		\le
		\max_{C\in\DN}
		\left\{
		-\Tr(A_0C)+2\Tr(E_C^{1/2})
		\right\}
		+
		6\sqrt{\frac{nN}{M}}\,\Arow.
	\]
	By Proposition~\ref{prop:leftdual}, the maximum above is \(-\rho_-\).  Using
	$\lambda_{\max}(-H_M)=-\lambda_{\min}(H_M)$, this is equivalent to
	\[
		\E\lambda_{\min}(H_M)
		\ge
		\rho_-
		-
		6\sqrt{\frac{nN}{M}}\,\Arow.
	\]
\end{proof}

\section{Lipschitz control and proof of the main theorem}
\label{sec:op-norm}

\begin{lemma}[Lipschitz control of spectral observables]
	\label{lem:spectral-lipschitz}
	On the real vector space
	\(\bigl(M_M(\C)_{\mathrm{sa}}\bigr)^n\), the maps
	\[
		(G_1,\ldots,G_n)\mapsto\lambda_{\max}(H_M),
		\qquad
		(G_1,\ldots,G_n)\mapsto\lambda_{\max}(-H_M),
		\qquad
		(G_1,\ldots,G_n)\mapsto\|H_M\|_{\op}
	\]
	are Lipschitz with respect to the real Euclidean norm on the GUE inputs induced
	by the Frobenius norm,
	\[
		\|(\Delta_1,\ldots,\Delta_n)\|_{\mathrm{Euc}}
		=
		\left(\sum_{i=1}^n\|\Delta_i\|_F^2\right)^{1/2},
	\]
	with constant
	\[
		L
		\le
		\frac{\Arow}{\sqrt M}.
	\]
\end{lemma}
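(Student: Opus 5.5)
The plan is to reduce all three maps to a single bound on the operator norm of the random part's increment. The key observation is that $\lambda_{\max}$, $\lambda_{\max}(-\cdot)$ and $\|\cdot\|_{\op}$ are all $1$-Lipschitz on Hermitian matrices with respect to the operator norm. This follows from Weyl's inequality $|\lambda_{\max}(X)-\lambda_{\max}(Y)|\le\|X-Y\|_{\op}$ and from the triangle inequality. Consider two inputs $(G_i)$ and $(G_i')$ with differences $\Delta_i=G_i-G_i'$, which are Hermitian. The deterministic part $A_0\otimes I_M$ cancels, and the difference of the corresponding matrices is
\[
	D=\frac1{\sqrt M}\sum_{i=1}^n A_i\otimes\Delta_i .
\]
So it suffices to show
\[
	\|D\|_{\op}\le\frac1{\sqrt M}\Arow\left(\sum_i\|\Delta_i\|_F^2\right)^{1/2}.
\]

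To bound $\|D\|_{\op}$ I will factor it as a product of a block row and a block column. Set
\[
	R=[A_1\otimes I_M\ \cdots\ A_n\otimes I_M]
\]
and let $K$ be the block column with entries $I_N\otimes\Delta_i$. Then $\sum_i A_i\otimes\Delta_i=RK$, and
\[
	\|R\|_{\op}^2=\|RR^*\|_{\op}=\left\|\sum_iA_i^2\otimes I_M\right\|_{\op}=\left\|\sum_iA_i^2\right\|_{\op},
\]
using that the $A_i$ are Hermitian. Likewise
\[
	\|K\|_{\op}^2=\left\|I_N\otimes\sum_i\Delta_i^2\right\|_{\op}=\left\|\sum_i\Delta_i^2\right\|_{\op}\le\sum_i\|\Delta_i\|_{\op}^2\le\sum_i\|\Delta_i\|_F^2.
\]
Combining these gives $\|D\|_{\op}\le\|R\|_{\op}\|K\|_{\op}$, which is the required bound. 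The inequality $\|K\|_{\op}^2\le\sum_i\|\Delta_i\|_{\op}^2$ is the triangle inequality applied to the positive matrices $\Delta_i^2$. No positivity of the $A_i$ is needed anywhere, only that they are Hermitian.

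There is no real obstacle here. The one point needing care is to bound the operator norm of the row and column blocks, rather than using the cruder estimate $\sum_i\|A_i\|_{\op}\|\Delta_i\|_{\op}$, which would lose a factor of $\sqrt n$ compared with $\Arow$.
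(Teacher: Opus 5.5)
Your proof is correct. It shares the paper's outer reduction to a bound on $\|\Delta H\|_{\op}$, but it proves that bound by a different mechanism.

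The paper works with Rayleigh quotients $\langle\vecop(S),\Delta H\,\vecop(S)\rangle$ for $\|S\|_F=1$, which is allowed because $\Delta H$ is Hermitian. It then:
\begin{itemize}
\item applies Cauchy--Schwarz to $\sum_i\Tr(S^*A_iS\Delta_i^{\top})$ in the Hilbert--Schmidt pairing;
\item uses $\|S^*A_iS\|_F\le\|S\|_{\op}\|A_iS\|_F\le\|A_iS\|_F$;
\item uses $\sum_i\|A_iS\|_F^2=\Tr S^*(\sum_iA_i^2)S$.
\end{itemize}
This stays inside the vectorization framework used throughout the paper. It also lands directly on $(\sum_i\|\Delta_i\|_F^2)^{1/2}$, which is exactly what the Gaussian concentration step needs.

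You instead factor $\sum_iA_i\otimes\Delta_i=RK$ and use the $C^*$-identities for $\|R\|_{\op}$ and $\|K\|_{\op}$. This bounds the operator norm directly, without passing through quadratic forms. It actually proves the stronger intermediate estimate
\[
\|\Delta H\|_{\op}\le\frac1{\sqrt M}\,\Arow\left\|\sum_{i=1}^n\Delta_i^2\right\|_{\op}^{1/2},
\]
with the passage to Frobenius norms made only in the last step. It also makes visible that the constant is the same row/column square function as in the noncommutative Khintchine remark of the introduction.

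For the application in Theorem~\ref{thm:main}, both arguments give the same Lipschitz constant.
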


\begin{proof}
	If $G_i$ is replaced by $G_i+\Delta_i$, with each $\Delta_i$ Hermitian, then
	the corresponding perturbation of $H_M$ is
	\[
		\Delta H=
		\frac1{\sqrt M}\sum_{i=1}^n A_i\otimes\Delta_i.
	\]
	By the variational characterization of $\lambda_{\max}$ and by the triangle
	inequality for the operator norm, all three observables in the statement
	change by at most $\|\Delta H\|_{\op}$.  We bound this operator norm using the
	Frobenius norm of the perturbations.  Since $\Delta H$ is Hermitian, it is enough
	to test Rayleigh quotients.  Let $S\in M_{N,M}(\C)$ with $\|S\|_F=1$.  Under the
	vectorization convention from Section~\ref{sec:setup},
	\[
		\begin{aligned}
			\left|
			\ip{\vecop(S)}{\Delta H\vecop(S)}
			\right|
			 & \le
			\frac1{\sqrt M}
			\sum_{i=1}^n
			\left|\Tr(S^*A_iS\Delta_i^{\top})\right| \\
			 & \le
			\frac1{\sqrt M}
			\left(\sum_{i=1}^n\|\Delta_i\|_F^2\right)^{1/2}
			\left(\sum_{i=1}^n\|S^*A_iS\|_F^2\right)^{1/2}.
		\end{aligned}
	\]
	By the ideal property of the Frobenius norm and since
	$\|S\|_{\op}\le\|S\|_F=1$,
	\[
		\|S^*A_iS\|_F\le\|S\|_{\op}\|A_iS\|_F \leq \|A_iS\|_F .
	\]
	Thus
	\[
		\sum_{i=1}^n\|S^*A_iS\|_F^2
		\le
		\sum_{i=1}^n\|A_iS\|_F^2
		=
		\Tr S^*\left(\sum_{i=1}^n A_i^2\right)S
		\le
		\left\|\sum_{i=1}^n A_i^2\right\|_{\op}.
	\]
	Taking the supremum over such $S$ gives
	\[
		\|\Delta H\|_{\op}
		\le
		\frac{\Arow}{\sqrt M}
		\left(\sum_{i=1}^n\|\Delta_i\|_F^2\right)^{1/2},
	\]
	which proves the claim.
\end{proof}

\mainthm

\begin{proof}[Proof of Theorem~\ref{thm:main}]
	Write
	\[
		X=\lambda_{\max}(H_M),
		\qquad
		Y=-\lambda_{\min}(H_M)=\lambda_{\max}(-H_M).
	\]
	Then
	\[
		\|H_M\|_{\op}=\max\{X,Y\}.
	\]
	Let
	\[
		\mu=
		\max\{\rho_+,-\rho_-\}
		+
		6\sqrt{\frac{nN}{M}}\,\Arow.
	\]
	Then, Proposition~\ref{prop:upperedge} and
	Proposition~\ref{prop:leftedge} give $\E X\le\mu$ and $\E Y\le\mu$.

	We now use Gaussian concentration in the real Euclidean coordinates
	corresponding to the Frobenius norm and our GUE normalization.
	By Lemma~\ref{lem:spectral-lipschitz} and the Gaussian concentration
	inequality for Lipschitz functions
	\cite[Theorem~5.6]{BoucheronLugosiMassart2013}, the centered random variables
	satisfy, for every $t\ge0$,
	\[
		\mathbb P\{X-\E X\ge t\}\le e^{-t^2/(2L^2)},
		\qquad
		\mathbb P\{Y-\E Y\ge t\}\le e^{-t^2/(2L^2)},
	\]
	where
	\[
		L
		\le
		\frac{\Arow}{\sqrt M}.
	\]
	Since $\E X\le \mu$ and $\E Y\le \mu$, we have
	\[
		\|H_M\|_{\op}
		=
		\max\{X,Y\}
		\le
		\mu+\max\{X-\E X,\,Y-\E Y\}.
	\]
	Therefore, by the standard maximal inequality for two subgaussian random
	variables, or directly by the union bound and tail integration with
	\(\min\{1,2e^{-t^2/(2L^2)}\}\le2e^{-t^2/(2L^2)}\),
	\[
		\begin{aligned}
			\E\max\{X-\E X,\,Y-\E Y\}
			 & \le
			\E\bigl(\max\{X-\E X,\,Y-\E Y\}\bigr)_+ \\
			 & \le
			2\int_0^\infty e^{-t^2/(2L^2)}\,dt      \\
			 & =
			\sqrt{2\pi}\,L
			\le
			3L .
		\end{aligned}
	\]
	Thus
	\[
		\E\|H_M\|_{\op}\le \mu+3L.
	\]
	The Lipschitz correction is smaller than the block-Wishart error term and is not
	the bottleneck in the estimate.  Indeed,
	\[
		L
		\le
		\frac{\Arow}{\sqrt M}
		\le
		\sqrt{\frac{nN}{M}}\,\Arow,
	\]
	so
	\[
		\E\|H_M\|_{\op}
		\le
		\rho_*
		+
		9\sqrt{\frac{nN}{M}}\,\Arow.
	\]
\end{proof}

\begin{remark}[The finite-dimensional constant]
	Before the final simplification, the proof gives the sharper estimate
	\[
		\E\|H_M\|_{\op}
		\le
		\rho_*+
		6\sqrt{\frac{nN}{M}}\,\Arow
		+
		\frac{3}{\sqrt M}\,\Arow.
	\]
	The numerical constants have not been optimized.  For example, retaining
	\(1+\sqrt{\pi/2}\) in Lemma~\ref{lem:wishart-estimate}, rather than rounding
	it to \(3\), already lowers the single combined constant \(9\) to about
	\(7.01\).  The estimate is aimed principally at regimes in which \(N\)
	grows with \(M\); the constant \(9\) is not intended to describe fine
	finite-size behavior.
\end{remark}

\section{Bounds from positive semidefinite Kraus decompositions}
\label{sec:rekraus}

The positivity hypothesis in Theorem~\ref{thm:main} is not intrinsic to the
displayed family $(A_i)_{i=1}^n$.  Indeed, both the law of the random model and
the Lehner edges only see the covariance map \(\eta\) in
\eqref{eq:covariance-map}.  This observation gives a practical way to apply the
preceding results to genuinely indefinite presentations of models that also
admit a positive presentation.

\begin{definition}[Positive semidefinite Kraus decomposition]
	\label{def:psd-rekraus}
	We say that the covariance map \(\eta\) \emph{admits a positive
		semidefinite Kraus decomposition} if there are positive semidefinite matrices
	\[
		\widetilde A_1,\ldots,\widetilde A_r\succeq0
	\]
	such that
	\begin{equation}
		\label{eq:psd-rekraus}
		\eta(X)=\sum_{j=1}^r\widetilde A_jX\widetilde A_j
		\qquad\text{for every }X\in M_N(\C).
	\end{equation}
\end{definition}

Thus the assumption is on the covariance map generated by the matrices, rather
than on the signs of a particular choice of generators.  Equivalently, after
adjoining zero matrices if necessary, one can pass from $(A_i)$ to a positive
semidefinite family by a real orthogonal mixing.  We record the invariance and
this equivalence explicitly.

\begin{lemma}[Orthogonal mixing invariance]
	\label{lem:orthogonal-mixing}
	Let $A_1,\ldots,A_k$ be Hermitian, with some of them possibly zero, let
	$O\in O(k)$, and put
	\[
		\widetilde A_j=\sum_{i=1}^k O_{ji}A_i.
	\]
	If $G_1,\ldots,G_k$ are independent GUE matrices, then
	\[
		A_0\otimes I_M+\frac1{\sqrt M}\sum_{i=1}^k A_i\otimes G_i
		\ \stackrel{d}{=}\
		A_0\otimes I_M+\frac1{\sqrt M}\sum_{j=1}^k
		\widetilde A_j\otimes G_j.
	\]
\end{lemma}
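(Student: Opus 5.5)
The plan is to show that the mixed tuple $(\widetilde G_1,\ldots,\widetilde G_k)$ with $\widetilde G_i:=\sum_{j=1}^k O_{ji}G_j$ again consists of independent GUE matrices. This rests on a simple reindexing. Since
\[
	\sum_{j=1}^k\widetilde A_j\otimes G_j=\sum_{j=1}^k\sum_{i=1}^k O_{ji}A_i\otimes G_j=\sum_{i=1}^k A_i\otimes\Bigl(\sum_{j=1}^k O_{ji}G_j\Bigr)=\sum_{i=1}^k A_i\otimes\widetilde G_i,
\]
the right-hand side of the claimed identity equals $A_0\otimes I_M+M^{-1/2}\sum_i A_i\otimes\widetilde G_i$. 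This is a fixed deterministic (indeed linear) function of $(\widetilde G_1,\ldots,\widetilde G_k)$, and the same function applied to $(G_1,\ldots,G_k)$ gives the left-hand side. So it suffices to prove
\[
	(\widetilde G_1,\ldots,\widetilde G_k)\stackrel{d}{=}(G_1,\ldots,G_k).
\]

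To prove this, view $(G_1,\ldots,G_k)$ as a centered Gaussian vector in the real Euclidean space $\bigl(M_M(\C)_{\mathrm{sa}}\bigr)^k$. The map $(G_j)\mapsto(\widetilde G_i)$ is the real-linear map $O^{\top}\otimes\mathrm{id}$, so its image is again centered jointly Gaussian. It is therefore enough to match covariances. For Hermitian test matrices $K_1,\ldots,K_k$, independence and the normalization $\E\ip{G_j}{K}_F^2=\|K\|_F^2$ give
\[
	\E\Bigl(\sum_i\ip{\widetilde G_i}{K_i}_F\Bigr)^2=\E\Bigl(\sum_j\ip{G_j}{\textstyle\sum_iO_{ji}K_i}_F\Bigr)^2=\sum_j\Bigl\|\sum_iO_{ji}K_i\Bigr\|_F^2=\sum_i\|K_i\|_F^2,
\]
where the last equality uses orthogonality of $O$: $\sum_jO_{ji}O_{ji'}=\delta_{ii'}$. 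The result coincides with $\E\bigl(\sum_i\ip{G_i}{K_i}_F\bigr)^2$. Equal quadratic forms on all test vectors mean equal covariances, and centered Gaussian laws are determined by their covariance, so the two tuples have the same law.

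No step here is a real obstacle. The only point needing care is that the Frobenius pairing used in the variance identity is the real inner product on Hermitian matrices, so the computation is genuinely one about real Gaussian vectors. Allowing some $A_i$ to be zero changes nothing in the argument, since zero coefficients simply contribute zero terms on both sides.
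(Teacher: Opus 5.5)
Your proposal is correct and follows the paper's proof: the paper also sets $\widehat G_i=\sum_jO_{ji}G_j$, uses the reindexing $\sum_j\widetilde A_j\otimes G_j=\sum_iA_i\otimes\widehat G_i$, and concludes because $(\widehat G_i)$ is centered jointly Gaussian with the same covariance as $(G_i)$. Your explicit covariance computation via $\sum_jO_{ji}O_{ji'}=\delta_{ii'}$ simply supplies the detail that the paper leaves implicit.
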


\begin{proof}
	Write $\widehat G_i=\sum_jO_{ji}G_j$.  The tuple
	$(\widehat G_i)_{i=1}^k$ is centered jointly Gaussian and has the same
	covariance as $(G_i)_{i=1}^k$, hence the two tuples have the same law.
	Moreover,
	\[
		\sum_j\widetilde A_j\otimes G_j
		=\sum_iA_i\otimes\widehat G_i,
	\]
	which proves the assertion.
\end{proof}

\begin{proposition}[$\eta$-equivalence of Hermitian families]
	\label{prop:eta-equivalence}
	Two finite Hermitian families $(A_i)$ and $(\widetilde A_j)$ generate the
	same map in \eqref{eq:covariance-map} if and only if the following holds.
	View each family as a vector of matrices and append zero matrices to the
	shorter vector so that the two vectors have the same length, say $k$.
	Then there is a real orthogonal matrix $O\in O(k)$ such that, after indexing
	the padded families by $1,\ldots,k$,
	\[
		\widetilde A_j=\sum_{i=1}^k O_{ji}A_i,
		\qquad 1\leq j\leq k.
	\]
\end{proposition}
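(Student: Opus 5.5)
The plan is to prove the two directions separately; the forward direction carries all the content and reduces to unitary freedom in Kraus representations, made real by Hermiticity.

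\emph{Reverse direction.} Suppose $\widetilde A_j=\sum_i O_{ji}A_i$ with $O\in O(k)$. Expanding,
\[
\sum_j\widetilde A_jX\widetilde A_j=\sum_{i,l}\Bigl(\sum_j O_{ji}O_{jl}\Bigr)A_iXA_l=\sum_iA_iXA_i ,
\]
since $O^{\top}O=I$. Appended zero matrices contribute nothing to either map.

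\emph{Forward direction: reduction to a Gram identity.} Suppose $\sum_iA_iXA_i=\sum_j\widetilde A_jX\widetilde A_j$ for all $X$. Let $\Phi:M_N(\C)\to M_N(\C)\otimes M_N(\C)$ be the linear isomorphism with $\Phi(E_{ab}\,X\,E_{cd})$-type action determined by $\Phi(AXB)\leftrightarrow A\otimes B^{\top}$; equivalently, a linear map $X\mapsto\sum_s P_sXQ_s$ determines the tensor $\sum_s P_s\otimes Q_s^{\top}$ uniquely. Hence
\[
\sum_iA_i\otimes A_i^{\top}=\sum_j\widetilde A_j\otimes\widetilde A_j^{\top}.
\]
Because $A_i^{\top}=\overline{A_i}$ for Hermitian $A_i$, the vectorizations $v_i=\vecop(A_i)\in\C^{N^2}$ satisfy
\[
\sum_iv_iv_i^*=\sum_j\widetilde v_j\widetilde v_j^* .
\]
Equivalently this is the Choi matrix identity. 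Pad both families to a common length $k$.

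\emph{Forward direction: constructing the mixing matrix.} Let $V=[v_1\cdots v_k]$ and $\widetilde V=[\widetilde v_1\cdots\widetilde v_k]$, so that $VV^*=\widetilde V\widetilde V^*$. The standard polar-decomposition argument then produces a unitary $U\in U(k)$ with $\widetilde V=VU^{\top}$, that is, $\widetilde A_j=\sum_i U_{ji}A_i$; this is the usual unitary freedom of Kraus operators.

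The main obstacle is upgrading $U$ to a real orthogonal matrix. Here Hermiticity supplies a real structure. The space $H$ of Hermitian matrices is a real inner-product space under $\Tr(XY)$, and each $A_i,\widetilde A_j\in H$. Define the real $k\times k$ Gram-type data by working in $H$ directly: consider the real linear maps $T:\mathbb R^k\to H$, $e_i\mapsto A_i$, and $\widetilde T$, $e_j\mapsto\widetilde A_j$. The tensor identity above is, after identifying $H\otimes_{\mathbb R}H$ with a real subspace of $M_N\otimes M_N$ (Hermitian matrices span $M_N$ over $\C$ with $H\otimes_{\mathbb R}\C=M_N$, so real tensors embed injectively), the statement $TT^{\top}=\widetilde T\widetilde T^{\top}$ as real operators on $H$, namely $\sum_iA_i\otimes A_i=\sum_j\widetilde A_j\otimes\widetilde A_j$ in $H\otimes_{\mathbb R}H$. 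The injectivity of this identification is exactly the step to verify carefully, and it follows from $H\otimes_{\mathbb R}\C\cong M_N$.

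Given $TT^{\top}=\widetilde T\widetilde T^{\top}$ with $T,\widetilde T:\mathbb R^k\to H$, the real polar decomposition gives $T=|T^{\top}|W$ and $\widetilde T=|\widetilde T^{\top}|\widetilde W$, where $W,\widetilde W$ are partial isometries from $\mathbb R^k$ whose initial spaces have equal dimension $\operatorname{rank}T$. Extending $\widetilde W^{\top}W$ on the orthogonal complements gives $O'\in O(k)$ with $\widetilde T=TO'$. This means $\widetilde A_j=\sum_iO'_{ij}A_i$, so taking $O=(O')^{\top}$ yields the claimed form $\widetilde A_j=\sum_iO_{ji}A_i$.
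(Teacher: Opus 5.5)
Your argument is correct, but it takes a genuinely different route from the paper's. The paper first uses real orthogonal changes of generators to strip real linear dependencies from both padded families, so that their nonzero parts become minimal Kraus representations of a common length $q$. It then cites the unitary freedom of minimal Kraus decompositions to obtain $\widetilde A_j=\sum_i u_{ji}A_i$ with $U$ unitary. Finally it shows $U$ is real, because Hermiticity gives $\sum_i\Ima(u_{ji})A_i=0$ and the $A_i$ are real-linearly independent.

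You instead never leave the real inner-product space $H$ of Hermitian matrices. You transfer $\sum_iA_i\otimes A_i=\sum_j\widetilde A_j\otimes\widetilde A_j$ into $H\otimes_{\mathbb R}H$; your injectivity claim is right, since a real basis of $H$ is a complex basis of $M_N(\C)$. You then read this identity as $TT^{\top}=\widetilde T\widetilde T^{\top}$ and use the real fact that this forces $\widetilde T=TO'$ for some $O'\in O(k)$.

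In effect you re-prove unitary freedom directly over $\mathbb R$. This needs no minimality reduction, no external citation and no separate reality step, and it treats non-minimal padded families in one stroke. The paper's version is shorter on the page only because it outsources that linear algebra.

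Three small points. The paragraph producing a complex unitary $U$ is never used and can be dropped. Your $\Phi$ should be a map from linear maps on $M_N(\C)$ to $M_N(\C)\otimes M_N(\C)$, not from $M_N(\C)$. In the last step, the partial isometry to extend is $W^{\top}\widetilde W$: extending $\widetilde W^{\top}W$ gives $T=\widetilde T O''$, so you would need its transpose.

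You can also bypass the tensor-product injectivity discussion entirely. With $E_{ab}$ the matrix units, $TT^{\top}Z=\sum_i\Tr(A_iZ)A_i=\sum_{a,b}\eta(E_{ab})\,Z\,E_{ba}$ for $Z\in H$, which visibly depends only on $\eta$.
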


\begin{proof}
	Suppose first that the two padded families are related by the orthogonal
	mixing displayed in the statement.  Then, for every $X\in M_N(\C)$,
	\[
		\sum_{j=1}^k\widetilde A_jX\widetilde A_j
		=\sum_{i,\ell=1}^k
		\left(\sum_{j=1}^kO_{ji}O_{j\ell}\right)A_iXA_\ell
		=\sum_{i=1}^kA_iXA_i,
	\]
	where the last equality follows from $O^{\mathsf T}O=I_k$.  Thus the two
	families generate the same map $\eta$.  Conversely, equality of the maps
	says that $(A_i)$ and $(\widetilde A_j)$ are two Kraus decompositions of the
	same completely positive map.  First append zeros to the shorter family so
	that both have the common length \(k\); this does not change either map.
	Use real orthogonal changes of generators in \(O(k)\) to remove real linear
	dependencies, recording each removed direction as a zero matrix.  For
	Hermitian matrices, real and complex linear independence are equivalent:
	a complex relation and its adjoint imply separately that the real and
	imaginary parts of the coefficients give real relations.  The resulting
	nonzero families are therefore minimal Kraus representations, of the same
	length, say \(q\).
	The unitary freedom of minimal Kraus decompositions
	\cite[Corollary~2.23]{Watrous2018} gives a unitary matrix $U=(u_{ji})$ such
	that
	$\widetilde A_j=\sum_i u_{ji}A_i$.  Since both sides are Hermitian,
	\[
		\sum_i\Ima(u_{ji})A_i=0.
	\]
	Real linear independence of the nonzero $A_i$ forces all the relevant
	coefficients $u_{ji}$ to be real.  Thus $U\in O(q)$.  Extend it to
	\(U\oplus I_{k-q}\in O(k)\), undo the initial real orthogonal changes, and
	compose the three orthogonal matrices.  This produces the matrix
	\(O\in O(k)\) in the statement.
\end{proof}

It follows that the law of $H_M$ depends on the coefficient family only through
$(A_0,\eta)$.  The same is true of all three free quantities because
\[
	A_0+Z+\sum_iA_iZ^{-1}A_i=A_0+Z+\eta(Z^{-1}).
\]
Moreover,
\begin{equation}
	\label{eq:eta-identity}
	\eta(I_N)=\sum_iA_i^2=\sum_j\widetilde A_j^2.
\end{equation}
We may therefore apply the positive-coefficient estimates to any positive
semidefinite Kraus family representing $\eta$.

\psdrekrausthm

\begin{proof}[Proof of Theorem~\ref{thm:psd-rekraus}]
	Proposition~\ref{prop:eta-equivalence} identifies the two coefficient
	families, after padding both to the common length \(\max\{n,r\}\), by an
	orthogonal mixing.
	Lemma~\ref{lem:orthogonal-mixing} therefore shows that their random models
	have the same law.  Their Lehner edges are also the same, and the
	square-function term of the positive semidefinite family is
	$\|\eta(I_N)\|_{\op}^{1/2}$ by \eqref{eq:eta-identity}.  Apply
	Propositions~\ref{prop:upperedge} and~\ref{prop:leftedge}, and
	Theorem~\ref{thm:main}, after discarding the padded zero coefficients from
	the positive family.  Thus its effective length is \(r\), rather than
	\(\max\{n,r\}\).
\end{proof}

Consequently, the asymptotic conclusion of Corollary~\ref{cor:asymptotic}
continues to hold whenever $r$ stays bounded, $N=o(M)$, and
$\|\eta(I_N)\|_{\op}$ stays bounded.  The following examples show concretely
that the criterion applies to indefinite coefficient families.  Write
\[
	\sigma_x=\begin{pmatrix}0&1\\1&0\end{pmatrix},
	\qquad
	\sigma_z=\begin{pmatrix}1&0\\0&-1\end{pmatrix}.
\]

\begin{example}[An indefinite coefficient removed by rotation]
	\label{ex:pauli-pair}
	Consider the family
	\[
		(A_1,A_2)=(\sigma_x,I_2).
	\]
	The matrix $\sigma_x$ is indefinite.  Nevertheless, the orthogonal change of
	generators
	\[
		\widetilde A_1=\frac{I_2+\sigma_x}{\sqrt2},
		\qquad
		\widetilde A_2=\frac{I_2-\sigma_x}{\sqrt2}
	\]
	produces two positive semidefinite matrices.  Hence
	Theorem~\ref{thm:psd-rekraus} applies with $r=2$, with exactly the same error
	as in the positive two-coefficient case.
\end{example}

\begin{proposition}[A nontrivial Pauli family]
	\label{prop:pauli-threshold}
	For $t>0$, consider the signed family
	\[
		\mathfrak F_t=(\sigma_x,\sigma_z,tI_2).
	\]
	Its covariance map admits a positive semidefinite Kraus decomposition if and
	only if $t\ge\sqrt2$.  When $t\ge\sqrt2$, one may use the four matrices
	\[
		\widetilde A_{\varepsilon_1,\varepsilon_2}
		=
		\frac12\bigl(\varepsilon_1\sigma_x+\varepsilon_2\sigma_z\bigr)
		+\frac t2I_2,
		\qquad
		(\varepsilon_1,\varepsilon_2)\in\{\pm1\}^2.
	\]
	At $t=\sqrt2$, these are rank-one positive semidefinite matrices with
	eigenvalues $0$ and $\sqrt2$.
\end{proposition}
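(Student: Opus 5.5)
The plan is to handle the two directions separately. Sufficiency is a direct verification. Necessity uses the structure theorem of Proposition~\ref{prop:eta-equivalence} to reduce everything to a sum-of-squares inequality.

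\emph{Sufficiency.} I would first check that the four displayed matrices form a Kraus decomposition of
\[
\eta(X)=\sigma_xX\sigma_x+\sigma_zX\sigma_z+t^2X .
\]
Expanding $\sum_{\varepsilon}\widetilde A_{\varepsilon}X\widetilde A_{\varepsilon}$ gives $\tfrac14$ times a sum over $(\varepsilon_1,\varepsilon_2)\in\{\pm1\}^2$. Every cross term carries a factor $\varepsilon_1$, $\varepsilon_2$, or $\varepsilon_1\varepsilon_2$, and so averages to zero. Each diagonal term appears four times, which leaves exactly $\sigma_xX\sigma_x+\sigma_zX\sigma_z+t^2X$. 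This is the same computation as in Proposition~\ref{prop:eta-equivalence}, using the orthogonal matrix with rows $\tfrac12(\varepsilon_1,\varepsilon_2,1,\varepsilon_1\varepsilon_2)$ after padding with a zero coefficient. For positivity, note that $\varepsilon_1\sigma_x+\varepsilon_2\sigma_z$ is Hermitian, traceless, and squares to $2I_2$, so its eigenvalues are $\pm\sqrt2$. Hence $\widetilde A_{\varepsilon}$ has eigenvalues $(t\pm\sqrt2)/2$. These are nonnegative precisely when $t\ge\sqrt2$, and at $t=\sqrt2$ they equal $0$ and $\sqrt2$, so each matrix has rank one.

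\emph{Necessity.} Suppose $\eta=\sum_{j=1}^r\widetilde A_j(\cdot)\widetilde A_j$ with every $\widetilde A_j\succeq0$. By Proposition~\ref{prop:eta-equivalence}, after padding both families with zeros to a common length $k$, there is $O\in O(k)$ such that
\[
\widetilde A_j=a_j\sigma_x+b_j\sigma_z+c_j\,tI_2,
\]
where $(a_j,b_j,c_j)=(O_{j1},O_{j2},O_{j3})$ are the first three columns of $O$. The columns of $O$ are unit vectors, so
\[
\sum_j a_j^2=\sum_j b_j^2=\sum_j c_j^2=1 .
\]
The eigenvalues of $\widetilde A_j$ are $c_jt\pm\sqrt{a_j^2+b_j^2}$, so positivity forces $c_jt\ge\sqrt{a_j^2+b_j^2}$. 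In particular $c_jt\ge0$, and squaring gives $a_j^2+b_j^2\le t^2c_j^2$. Summing over $j$ yields $2\le t^2$, that is, $t\ge\sqrt2$.

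The only delicate point is the appeal to Proposition~\ref{prop:eta-equivalence}. It requires the Kraus families to be Hermitian, which holds here because positive semidefinite matrices are Hermitian. It also relies on zero-padding, which is harmless: padded entries correspond to zero matrices and contribute nothing to the inequality. Everything else is elementary $2\times2$ spectral computation.
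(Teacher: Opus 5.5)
Your proof is correct and follows the paper's argument essentially step for step. For sufficiency, you use the orthogonal mixing with rows $\tfrac12(\varepsilon_1,\varepsilon_2,1,\varepsilon_1\varepsilon_2)$, with your cross-term expansion as an equivalent check, together with the eigenvalues $(t\pm\sqrt2)/2$. For necessity, you apply Proposition~\ref{prop:eta-equivalence}, use positivity to get $c_jt\ge\sqrt{a_j^2+b_j^2}$, and sum the squares against the unit-norm columns to obtain $t^2\ge2$.
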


\begin{proof}
	Order the elements of $\{\pm1\}^2$ as
	\[
		(+,+),(+,-),(-,+),(-,-).
	\]
	Then
	\[
		\begin{pmatrix}
			\widetilde A_{+,+} \\
			\widetilde A_{+,-} \\
			\widetilde A_{-,+} \\
			\widetilde A_{-,-}
		\end{pmatrix}
		=
		\frac12
		\begin{pmatrix}
			1  & 1  & 1 & 1  \\
			1  & -1 & 1 & -1 \\
			-1 & 1  & 1 & -1 \\
			-1 & -1 & 1 & 1
		\end{pmatrix}
		\begin{pmatrix}
			\sigma_x \\
			\sigma_z \\
			tI_2     \\
			0
		\end{pmatrix}.
	\]
	The displayed coefficient matrix is orthogonal.  Hence the family
	$(\widetilde A_{\varepsilon_1,\varepsilon_2})$ is an orthogonal mixing of
	$(\sigma_x,\sigma_z,tI_2,0)$, and therefore has the same covariance map.  Its
	two eigenvalues are
	$(t\pm\sqrt2)/2$, so it is positive semidefinite exactly when
	$t\ge\sqrt2$.

	For necessity, let $(\widetilde A_j)$ be any positive semidefinite Kraus
	family for this map.  Proposition~\ref{prop:eta-equivalence} gives a real
	isometry $(o_{jk})$ such that
	\[
		\widetilde A_j
		=o_{j1}\sigma_x+o_{j2}\sigma_z+t\,o_{j3}I_2.
	\]
	Positivity forces
	$t\,o_{j3}\ge(o_{j1}^2+o_{j2}^2)^{1/2}$.  Squaring and summing in $j$, and
	using orthonormality of the three columns, gives $t^2\ge2$.
\end{proof}

\begin{remark}
	For a displayed coefficient tuple, the condition in
	Definition~\ref{def:psd-rekraus} is genuinely weaker than termwise
	positivity, but it is not automatic for a signed family.  It does not,
	however, enlarge the class of random-matrix laws represented by positive
	families.  An explicit orthogonal mixing into positive semidefinite matrices
	is a directly verifiable certificate.  The next section explains what fails
	in the direct comparison when no such certificate is available.
\end{remark}

\section{Failure of the direct comparison for signed coefficients}
\label{sec:signed}

In this final section, we delimit the scope of the particular increment
comparison used in Section~\ref{sec:full-process}.  The occurrence of negative
eigenvalues in one Kraus family is not itself an obstruction, as
Section~\ref{sec:rekraus} shows, but a general covariance map need not admit a
positive semidefinite Kraus decomposition.  What follows is an obstruction to
our sufficient comparison condition, not to the desired inequality itself.

Let $A_i=A_i^*$ and $B_i(C)=C^{1/2}A_iC^{1/2}$.  Then $B_i(C)=B_i(C)^*$ and
\[
	B_i(C)^2=C^{1/2}A_iCA_iC^{1/2}.
\]
Consequently,
\[
	\sum_{i=1}^nB_i(C)^2
	=C^{1/2}\left(\sum_{i=1}^nA_iCA_i\right)C^{1/2}.
\]
Thus the fixed-fiber expression
\[
	\Tr(A_0C)+2\Tr\left[\left(C^{1/2}\eta(C)C^{1/2}\right)^{1/2}\right]
\]
and the variational formula over $C\in\DN$ remain valid without positivity of the
individual $A_i$.  Here a \emph{fiber} means the subset of the process index
set \(\DN\times\St_{N,M}\) on which \(C\) is fixed and only \(W\) varies.

\begin{proposition}[Failure of the signed fixed-fiber increment bound]
	\label{prop:signed-increment-failure}
	For signed Hermitian \(B\), the inequality
	\[
		\|W^*BW-V^*BV\|_F^2\le2\|B(W-V)\|_F^2
	\]
	need not hold, even when \(N=M=2\).
\end{proposition}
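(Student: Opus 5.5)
The plan is to give an explicit counterexample with \(N=M=2\). In this case \(\St_{2,2}\) is the unitary group, so it suffices to exhibit a signed Hermitian \(B\) and two real rotations \(W,V\) for which the inequality fails. The choice is guided by a first-order heuristic. For \(W=I\) and \(V=e^{\theta X}\) with \(X\) skew-Hermitian and \(\theta\) small, the left side is approximately \(\theta^2\|[X,B]\|_F^2\), while the right side is approximately \(2\theta^2\|BX\|_F^2\). For a positive \(B\), Lemma~\ref{lem:twofiber} keeps the commutator below \(\sqrt2\,\|BX\|_F\). For a signed \(B\), the commutator can be as large as \(2\|BX\|_F\), and this doubling is what breaks the inequality.

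Concretely, I take \(B=\sigma_z\), \(W=I_2\), and
\[
V=R_\theta=\begin{pmatrix}\cos\theta&-\sin\theta\\ \sin\theta&\cos\theta\end{pmatrix}.
\]
Both \(W\) and \(V\) lie in \(\St_{2,2}\). The key computation is the conjugation identity
\[
R_\theta^{\top}\sigma_zR_\theta=\cos 2\theta\,\sigma_z-\sin2\theta\,\sigma_x .
\]
From it,
\[
W^*BW-V^*BV=(1-\cos2\theta)\sigma_z+\sin2\theta\,\sigma_x .
\]
Since \(\sigma_x\) and \(\sigma_z\) are Frobenius-orthogonal, each with squared Frobenius norm \(2\), the left side equals
\[
2\bigl[(1-\cos2\theta)^2+\sin^22\theta\bigr]=4(1-\cos2\theta)=8\sin^2\theta .
\]

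For the right side, \(\sigma_z\) is unitary, so
\[
\|B(W-V)\|_F^2=\|I-R_\theta\|_F^2=2(1-\cos\theta)^2+2\sin^2\theta=4(1-\cos\theta).
\]
Hence the right side is \(8(1-\cos\theta)=16\sin^2(\theta/2)\). Writing \(8\sin^2\theta=32\sin^2(\theta/2)\cos^2(\theta/2)\), the inequality fails exactly when \(\cos^2(\theta/2)>1/2\), that is, for every \(0<\theta<\pi/2\).

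To close the argument I fix \(\theta=\pi/4\). Then the left side is \(4\) and the right side is \(8(1-1/\sqrt2)\approx2.34\), which gives the required violation. There is no real obstacle here. The only care needed is in the conjugation identity and in the Frobenius orthogonality of \(\sigma_x\) and \(\sigma_z\). I would also remark that the natural first tries, \(V=\sigma_x\) or \(V=i\sigma_y\), give equality (\(8=8\)). This is why a small rotation, rather than a reflection, is the right choice.
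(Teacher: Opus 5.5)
Your proof is correct and is essentially the paper's argument: the same \(B=\operatorname{diag}(1,-1)\), \(W=I_2\) and rotation \(V\), giving \(8\sin^2\theta\) against \(8(1-\cos\theta)\). You go slightly further than the paper's ``small nonzero \(\theta\)'' by identifying the exact failure range \(0<\theta<\pi/2\) and checking the explicit value \(\theta=\pi/4\).
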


\begin{proof}
	Take
	\[
		W=I_2,
		\qquad
		B=\begin{pmatrix}1&0\\0&-1\end{pmatrix},
		\qquad
		V=\begin{pmatrix}\cos\theta & -\sin\theta \\
               \sin\theta & \cos\theta\end{pmatrix}.
	\]
	A direct computation gives
	\[
		\|B-V^*BV\|_F^2=8\sin^2\theta,
		\qquad
		2\|B(I-V)\|_F^2=8(1-\cos\theta).
	\]
	The desired inequality would require
	\(\sin^2\theta\le1-\cos\theta\), which fails for small nonzero
	\(\theta\).
\end{proof}

For a fixed Hermitian $B$ one still has the elementary estimate
\[
	W^*BW-V^*BV=W^*B(W-V)+(W-V)^*BV,
\]
and hence
\[
	\|W^*BW-V^*BV\|_F\le2\|B(W-V)\|_F.
\]
After squaring, this gives the weaker bound
\[
	\|W^*BW-V^*BV\|_F^2\le4\|B(W-V)\|_F^2.
\]
At a fixed fiber, the Sudakov argument then yields a comparison with a Gaussian
process whose amplitude is larger by a factor $\sqrt2$.  Thus the deterministic
comparison functional
\[
	\Tr(A_0C)+2\Tr(E_C^{1/2})
\]
would be replaced by
\[
	\Tr(A_0C)+2\sqrt2\,\Tr(E_C^{1/2}),
\]
which no longer recovers Lehner's leading constant.  This observation is only a
fixed-fiber calculation: extending it to the full process would require an
increment estimate for two different matrices $B_i(C)$ and $B_i(C')$, which is
precisely where the positive-coefficient proof uses the Araki--Yamagami
inequality.

To conclude, Proposition~\ref{prop:signed-increment-failure} shows only that the
increment hypothesis underlying our direct Sudakov--Fernique comparison fails
for general signed families.  It neither disproves the desired inequality nor
supplies a finite-dimensional bound for such families.  Indeed, the general
signed asymptotic statement follows by other methods from
\cite{CGVvHClassical}.  Recovering the free leading constant for every
Hermitian family appears to require a different comparison or a different
method.

\section*{Acknowledgements}

The minimax duality of Proposition~\ref{prop:dual}---equating Lehner's infimum
over positive definite matrices with the supremum over density matrices via
Sion's theorem---was pointed out to the first author by R.~van Handel during a
visit to Princeton University.  We are deeply grateful to him for this insight,
for many thoughtful exchanges on the scope and presentation of the result, and
for a very careful reading of an earlier version of this paper.
The first author also thanks G.~Aubrun and C.~Bordenave for useful discussions on Sudakov--Fernique comparison techniques at an earlier stage of this project.
The authors also acknowledge ChatGPT 5.5 for suggesting the use of
\cite{ArakiYamagami1981} in the comparison process.

The authors were supported by JSPS Grant-in-Aid Scientific Research (A)
no.~25H00593, and Challenging Research (Exploratory) no.~23K17299.

\bibliographystyle{plain}
\bibliography{references}

@article{Lehner,
  author = {Lehner, Franz},
  title = {Computing norms of free operators with matrix coefficients},
  journal = {American Journal of Mathematics},
  volume = {121},
  number = {3},
  pages = {453--486},
  year = {1999},
  doi = {10.1353/ajm.1999.0022},
}

@article{KuniskyLehnerSDP2026,
  author = {Kunisky, Dmitriy},
  title = {Lehner's operator norm formulas, semidefinite programming, and spiked
           matrix models},
  journal = {arXiv preprint arXiv:2606.14687},
  year = {2026},
  eprint = {2606.14687},
  archivePrefix = {arXiv},
}

@article{ParmaksizVanHandel2025,
  author = {Parmaksiz, Emre and van Handel, Ramon},
  title = {Computing extreme singular values of free operators},
  journal = {arXiv preprint arXiv:2510.23987},
  year = {2025},
  eprint = {2510.23987},
  archivePrefix = {arXiv},
}

@article{HaagerupThorbjornsen,
  author = {Haagerup, Uffe and Thorbj{\o}rnsen, Steen},
  title = {A new application of random matrices: {$\mathrm{Ext}(C^*_{\mathrm{red}}(F_2))$} is not a group},
  journal = {Annals of Mathematics},
  series = {2},
  volume = {162},
  number = {2},
  pages = {711--775},
  year = {2005},
}

@article{CollinsParraudGuionnet,
  author = {Collins, Beno{\^i}t and Guionnet, Alice and Parraud, Felix},
  title = {On the operator norm of non-commutative polynomials in deterministic
           matrices and iid {GUE} matrices},
  journal = {Cambridge Journal of Mathematics},
  volume = {10},
  number = {1},
  pages = {195--260},
  year = {2022},
  doi = {10.4310/cjm.2022.v10.n1.a3},
}

@article{Anderson2013,
  author = {Anderson, Greg W.},
  title = {Convergence of the largest singular value of a polynomial in
           independent {W}igner matrices},
  journal = {The Annals of Probability},
  volume = {41},
  number = {3B},
  pages = {2103--2181},
  year = {2013},
}

@article{BBvH2023,
  author = {Bandeira, Afonso S. and Boedihardjo, March T. and van Handel, Ramon},
  title = {Matrix concentration inequalities and free probability},
  journal = {Inventiones Mathematicae},
  volume = {234},
  number = {1},
  pages = {419--487},
  year = {2023},
  doi = {10.1007/s00222-023-01204-6},
}

@article{BCSvH2026,
  author = {Bandeira, Afonso S. and Cipolloni, Giorgio and Schr{\"o}der,
            Dominik and van Handel, Ramon},
  title = {Matrix concentration inequalities and free probability {II}.
           {Two-sided} bounds and applications},
  journal = {Communications of the American Mathematical Society},
  volume = {6},
  pages = {896--946},
  year = {2026},
  doi = {10.1090/cams/75},
  eprint = {2406.11453},
  archivePrefix = {arXiv},
}

@article{vanHandel2017SpectralNorm,
  author = {van Handel, Ramon},
  title = {On the spectral norm of {G}aussian random matrices},
  journal = {Transactions of the American Mathematical Society},
  volume = {369},
  number = {11},
  pages = {8161--8178},
  year = {2017},
  doi = {10.1090/tran/6922},
  eprint = {1502.05003},
  archivePrefix = {arXiv},
}

@article{BuchholzKhintchine,
  author = {Buchholz, Artur},
  title = {Operator {Khintchine} inequality in non-commutative probability},
  journal = {Mathematische Annalen},
  volume = {319},
  number = {1},
  pages = {1--16},
  year = {2001},
  doi = {10.1007/PL00004425},
}

@book{PisierOperatorSpace,
  author = {Pisier, Gilles},
  title = {Introduction to Operator Space Theory},
  publisher = {Cambridge University Press},
  address = {Cambridge},
  year = {2003},
}

@article{PisierSubexponentialOperatorSpaces,
  author = {Pisier, Gilles},
  title = {Random matrices and subexponential operator spaces},
  journal = {Israel Journal of Mathematics},
  volume = {203},
  pages = {223--273},
  year = {2014},
  doi = {10.1007/s11856-014-1069-0},
  eprint = {1212.2053},
  archivePrefix = {arXiv},
}

@article{CGVvHClassical,
  author = {Chen, Chi-Fang and Garza-Vargas, Jorge and van Handel, Ramon},
  title = {A new approach to strong convergence {II}. {The} classical ensembles},
  journal = {Geometric and Functional Analysis},
  year = {2026},
  note = {To appear},
  doi = {10.1007/s00039-026-00744-2},
  eprint = {2412.00593},
  archivePrefix = {arXiv},
}

@article{CGVTvHClassicalI,
  author = {Chen, Chi-Fang and Garza-Vargas, Jorge and Tropp, Joel A. and van
            Handel, Ramon},
  title = {A new approach to strong convergence},
  journal = {Annals of Mathematics},
  series = {2},
  volume = {203},
  number = {2},
  pages = {555--602},
  year = {2026},
  doi = {10.4007/annals.2026.203.2.4},
}

@article{ParraudTensor2024,
  author = {Parraud, F{\'e}lix},
  title = {The spectrum of a tensor of random and deterministic matrices},
  journal = {arXiv preprint arXiv:2410.04481},
  year = {2024},
  eprint = {2410.04481},
  archivePrefix = {arXiv},
}

@incollection{vanHandelSurvey2025,
  author = {van Handel, Ramon},
  title = {The strong convergence phenomenon},
  booktitle = {Current Developments in Mathematics 2025},
  publisher = {International Press},
  pages = {177--261},
  year = {2026},
  eprint = {2507.00346},
  archivePrefix = {arXiv},
}

@article{Voiculescu1991,
  author = {Voiculescu, Dan},
  title = {Limit laws for random matrices and free products},
  journal = {Inventiones Mathematicae},
  volume = {104},
  number = {1},
  pages = {201--220},
  year = {1991},
  doi = {10.1007/BF01245072},
}

@article{ArakiYamagami1981,
  author = {Araki, Huzihiro and Yamagami, Shigeru},
  title = {An inequality for {Hilbert-Schmidt} norm},
  journal = {Communications in Mathematical Physics},
  volume = {81},
  number = {1},
  pages = {89--96},
  year = {1981},
  doi = {10.1007/BF01941801},
}

@incollection{DavidsonSzarek2001,
  author = {Davidson, Kenneth R. and Szarek, Stanis{\l}aw J.},
  title = {Local operator theory, random matrices and {B}anach spaces},
  booktitle = {Handbook of the Geometry of Banach Spaces, Vol. I},
  pages = {317--366},
  publisher = {North-Holland},
  address = {Amsterdam},
  year = {2001},
}

@incollection{LedouxLargestEigenvalues,
  author = {Ledoux, Michel},
  title = {A remark on hypercontractivity and tail inequalities for the largest
           eigenvalues of random matrices},
  booktitle = {S{\'e}minaire de Probabilit{\'e}s XXXVII},
  series = {Lecture Notes in Mathematics},
  volume = {1832},
  pages = {360--369},
  publisher = {Springer},
  address = {Berlin},
  year = {2003},
}

@incollection{AubrunLargestEigenvalue,
  author = {Aubrun, Guillaume},
  title = {A sharp small deviation inequality for the largest eigenvalue of a
           random matrix},
  booktitle = {S{\'e}minaire de Probabilit{\'e}s XXXVIII},
  series = {Lecture Notes in Mathematics},
  volume = {1857},
  pages = {320--337},
  publisher = {Springer},
  address = {Berlin},
  year = {2005},
}

@book{Bhatia1997,
  author = {Bhatia, Rajendra},
  title = {Matrix Analysis},
  series = {Graduate Texts in Mathematics},
  volume = {169},
  publisher = {Springer},
  address = {New York},
  year = {1997},
}

@book{TomczakJaegermann1989,
  author = {Tomczak-Jaegermann, Nicole},
  title = {Banach--Mazur Distances and Finite-Dimensional Operator Ideals},
  series = {Pitman Monographs and Surveys in Pure and Applied Mathematics},
  volume = {38},
  publisher = {Longman Scientific \& Technical},
  address = {Harlow},
  year = {1989},
}

@article{Sion,
  author = {Sion, Maurice},
  title = {On general minimax theorems},
  journal = {Pacific Journal of Mathematics},
  volume = {8},
  number = {1},
  pages = {171--176},
  year = {1958},
  doi = {10.2140/pjm.1958.8.171},
}

@book{Vershynin,
  author = {Vershynin, Roman},
  title = {High-Dimensional Probability: An Introduction with Applications in
           Data Science},
  publisher = {Cambridge University Press},
  address = {Cambridge},
  year = {2018},
}

@book{BoucheronLugosiMassart2013,
  author = {Boucheron, St{\'e}phane and Lugosi, G{\'a}bor and Massart, Pascal},
  title = {Concentration Inequalities: A Nonasymptotic Theory of Independence},
  publisher = {Oxford University Press},
  address = {Oxford},
  year = {2013},
}

@article{Vitale,
  author = {Vitale, Richard A.},
  title = {Some comparisons for {Gaussian} processes},
  journal = {Proceedings of the American Mathematical Society},
  volume = {128},
  number = {10},
  pages = {3043--3046},
  year = {2000},
  doi = {10.1090/S0002-9939-00-05367-3},
}

@book{Watrous2018,
  author = {Watrous, John},
  title = {The Theory of Quantum Information},
  publisher = {Cambridge University Press},
  year = {2018},
  isbn = {9781107180567},
}

@article{Stojnic2026,
  author = {Stojnic, Mihailo},
  title = {An {RDT} based confirmation of {Lehner}'s formula for
           {Kronecker}--{Gaussian} matrices},
  journal = {arXiv preprint arXiv:2607.26551},
  year = {2026},
  eprint = {2607.26551},
  archivePrefix = {arXiv},
}

\end{document}